\newtheorem{prop}{Proposition}[section]
\newtheorem{thm}[prop]{Theorem}
\newtheorem{lem}[prop]{Lemma}
\theoremstyle{definition}
\newtheorem{ex}[prop]{Example}
\newtheorem{rem}[prop]{Remark}
\newtheorem*{ack}{Acknowledgements}
\def\co{\colon\thinspace}
\newcommand{\ip}{{\,\rule{2.3mm}{.2mm}\rule{.2mm}{2.3mm}\;\, }}
\newcommand{\alst}{\alpha_{\mathrm{st}}}
\newcommand{\C}{\mathbb C}
\newcommand{\CP}{\mathbb{C}\mathrm{P}}
\newcommand{\D}{\mathbb D}
\newcommand{\rmd}{\mathrm d}
\newcommand{\rme}{\mathrm e}
\newcommand{\bfoo}{\mathbf{0}}
\newcommand{\bfp}{\mathbf p}
\newcommand{\bfq}{\mathbf q}
\newcommand{\R}{\mathbb R}
\newcommand{\RP}{\mathbb{R}\mathrm{P}}
\newcommand{\wtR}{\widetilde{R}}
\newcommand{\ttt}{\texttt{t}}
\newcommand{\bfv}{\mathbf v}
\newcommand{\WW}{\mathcal{W}}
\newcommand{\bfw}{\mathbf w}
\newcommand{\xist}{\xi_{\mathrm{st}}}
\newcommand{\lcan}{\lambda_{\mathrm{can}}}
\newcommand{\ocan}{\omega_{\mathrm{can}}}
\DeclareMathOperator{\id}{id}
\begin{document}

\author[M.~D\"orner]{Max D\"orner}
\author[H.~Geiges]{Hansj\"org Geiges}
\address{Mathematisches Institut, Universit\"at zu K\"oln,
Weyertal 86--90, 50931 K\"oln, Germany}
\email{mdoerner@math.uni-koeln.de}
\email{geiges@math.uni-koeln.de}
\author[K.~Zehmisch]{Kai Zehmisch}
\address{Mathematisches Institut, WWU M\"unster, Einsteinstra{\ss}e 62,
48149 M\"unster, Germany}
\email{kai.zehmisch@uni-muenster.de}

\thanks{The research reported in this survey is supported by the
Deutsche Forschungsgemeinschaft (GE 1245/2-1 to H.G.\
and ZE 992/1-1 to K.Z.)\ and is part of a project
in the SFB/TRR 191 `Symplectic Structures in Geometry, Algebra and Dynamics'.}

\title[Finsler geodesics, periodic Reeb orbits, and open books]{Finsler
geodesics, periodic Reeb orbits,\\ and open books}

\date{}

\begin{abstract}
We survey some results on the existence (and non-existence)
of periodic Reeb orbits
on contact manifolds, both in the open and closed case. We place these
statements in the context of Finsler geometry by including a proof
of the folklore theorem that the Finsler geodesic flow can be
interpreted as a Reeb flow. As a mild extension of previous results
we present existence statements on periodic Reeb orbits
on contact manifolds with suitable supporting open books.
\end{abstract}

\subjclass[2010]{37J45; 37C27, 53B40, 53D25, 53D35}

\maketitle


\section{Introduction}
From a Riemannian perspective, Reeb flows on contact manifolds may be
regarded as a generalisation of the geodesic flow on a Finsler manifold.
It is therefore reasonable to ask to what extent statements
about geodesic flows are instances of more
general facts in Reeb dynamics.

In this brief survey, we are interested in existence and
non-existence statements about periodic Reeb orbits. Recall
that a \emph{contact form} on a $(2n+1)$-dimensional manifold
is a $1$-form $\alpha$ such that $\alpha\wedge(\rmd\alpha)^n$ is
nowhere zero, i.e.\ a volume form. The \emph{Reeb vector field}
$R_{\alpha}$ of $\alpha$ is uniquely defined by the equations
\[ R_{\alpha}\ip\rmd\alpha=0\;\;\;\text{and}\;\;\;\alpha(R_{\alpha})=1,\]
where $\ip$ denotes the interior product.

A (cooriented) \emph{contact structure} $\xi\subset TM$ is the
maximally non-integrable tangent hyperplane field defined as the kernel
of a contact form $\alpha$; this contact form is
unique up to multiplication with a positive function.
A contact manifold $(M,\xi)$ is said to satisfy the
\emph{Weinstein conjecture}~\cite{wein79}, if
\begin{itemize}
\item[(W)] the Reeb vector field of every contact form defining $\xi$ has
a periodic Reeb orbit.
\end{itemize}

For closed manifolds, this conjecture
has been established under various assumptions on
the topology of $M$ or particular properties of~$\xi$
(such as so-called overtwistedness). In its full generality,
the conjecture remains open. There are simple examples of open
manifolds where (W) fails, but recently there has been some
progress on establishing (W) for a number of open contact manifolds.

A stronger requirement is that
\begin{itemize}
\item[(W$^{\circ}$)]
the Reeb vector field of  every contact form defining $\xi$ has
a contractible periodic orbit.
\end{itemize}

Here the period for which the orbit is contractible is not required
to be the minimal one. Even with this relaxation, one cannot
expect (W$^{\circ}$) to hold for all closed contact manifolds.
Take, for instance, the $3$-torus
$S^1\times S^1\times S^1$ with the contact form
$\sin\theta\,\rmd x-\cos\theta\,\rmd y$. The Reeb vector field
$\sin\theta\,\partial_x-\cos\theta\,\partial_y$ has plenty of
periodic Reeb orbits, all of them non-contractible.

The fact that the Finsler geodesic flow is a Reeb flow has been
mentioned in various sources, but we have not been able to
find a reference that includes a complete proof. Therefore, as a service
to the reader, we present such a demonstration in
Section~\ref{section:Finsler}. In Section~\ref{section:dual}
we argue that it is expedient to view Finsler geodesic flows
on the cotangent rather than the tangent bundle, where Reeb flows
appear not as an \emph{ad hoc} interpretation, but
as the natural generalisation of Finsler geodesic flows.

In Section~\ref{section:non-compact} we discuss the existence and
non-existence of periodic Reeb orbits on non-compact manifolds.
The last two sections of this survey deal with the Weinstein conjecture
on closed manifolds. It is shown how information on the topology
of the manifold allows one to prove (W) or (W$^{\circ}$).
In Section~\ref{section:surgery}, the information
on the topology comes from surgery; in Section~\ref{section:books},
from open book decompositions. This last section contains instances
of (W) or (W$^{\circ}$) that have not previously appeared in the literature.
\section{Finsler geodesics as Reeb orbits}
\label{section:Finsler}
It is a folklore result that the Finsler geodesic flow can be viewed as
a Reeb flow, see \cite[Theorem~1.1.2]{bart12} or
\cite{hrsa13}, for instance. Here we present a complete
proof of this fact, inspired by the remark in \cite[p.~6]{bart12}
that one should identify the defining equations for the Reeb
vector field as the Euler--Lagrange equations for the length
functional.

On a manifold $Q$, we write tangent vectors as
$(\bfq,\bfv)\in T_{\bfq}Q$ or simply $\bfv$ if the base point
is clear from the context. Local coordinates
on the tangent bundle $TQ$ are chosen such that
\[ (q^1,\ldots,q^n,v^1,\ldots,v^n)=\Bigl(v^i\frac{\partial}{\partial q^i}
\Bigr)_{(q^1,\ldots,q^n)},\]
with summation convention understood.
A \emph{Finsler metric} on $Q$ is a function
$F\co TQ\rightarrow\R_0^+$ on the tangent bundle with the
following properties:
\begin{itemize}
\item[(i)] $G:=F^2/2$ is smooth outside the zero section $Q\subset TQ$.
\item[(ii)] $F(\bfq,\bfv)=0$ if and only if $\bfv =\bfoo$.
\item[(iii)] $F(\bfq,\rho\bfv)=\rho F(\bfq,\bfv)$ for $\rho\in\R^+$.
\item[(iv)] $G$ is strictly convex, i.e.\ at each point $\bfq\in Q$
the function $\bfv\mapsto G(\bfq,\bfv)$ on $T_{\bfq}Q\setminus\{\bfoo\}$
has a positive definite Hessian matrix.
\end{itemize}
Condition (iv) implies that the sublevel sets of $G$ (or~$F$) are
strictly convex, cf.~\cite[Lemma~5.3.1]{geig08}.

A Riemannian metric $g$ on $Q$ gives rise to a Finsler metric
by setting $F(\bfq,\bfv)=\bigl(g_{\bfq}(\bfv,\bfv)\bigr)^{1/2}$.
In this case $G$ is smooth on all of $TQ$. In fact, this
smoothness property at the zero section
characterises Finsler metrics coming from a Riemannian metric,
see \cite[Lemma~1.4.1]{abpa94}.

The length of a smooth curve $\gamma\co [a,b]\rightarrow Q$ with respect to
the Finsler metric $F$ is defined as
\[ L(\gamma)=\int_a^b F\bigl(\gamma(t),\dot{\gamma}(t)\bigr)\, \rmd t.\]
The critical points of this length functional are the
\emph{Finsler geodesics} when para\-metrised proportional to arc length.
For a characterisation of Finsler geodesics as critical points
of an energy functional see~\cite{rade04a, rade04b}; here the parametrisation
proportional to arc length is automatic.

We do \emph{not} require the Finsler metric to be \emph{reversible},
i.e.\ we do not impose the condition $F(\bfq,-\bfv)=F(\bfq,\bfv)$.
Consequently, the orientation of a curve is essential for its
being a geodesic. Nonetheless, as in the Riemannian setting,
geodesics are defined by a geodesic vector field on the
tangent bundle, defining a directed geodesic flow.
Thus, through every point in $TQ\setminus Q$ there
is a unique directed curve projecting to a geodesic;
see \cite[Section~1.5]{abpa94}.

In order to relate Finsler geodesics to Reeb flows, we rewrite this
length functional in terms of the so-called \emph{Hilbert form}.
In local coordinates on $TQ\setminus Q$, this $1$-form is defined as
\[ \beta:=\frac{\partial F}{\partial v^i}\,\rmd q^i.\]
Given different local coordinates $(x^j,w^j)$, we have
\[ v^i=\frac{\partial q^i}{\partial x^j}\, w^j,\]
whence
\[ \frac{\partial v^i}{\partial w^j}=\frac{\partial q^i}{\partial x^j}.\]
This implies
\[ \frac{\partial F}{\partial w^j}\,\rmd x^j=
\frac{\partial F}{\partial v^i}\,\frac{\partial v^i}{\partial w^j}\,\rmd x^j
=\frac{\partial F}{\partial v^i}\,\frac{\partial q^i}{\partial x^j}
\,\rmd x^j=\frac{\partial F}{\partial v^i}\,\rmd q^i,\]
which shows that the Hilbert form
$\beta$ is globally defined on $TQ\setminus Q$.

Observe that by Euler's theorem on homogeneous functions and
the homogeneity condition (iii) we
have $v^i(\partial F/\partial v^i)=F$. This allows one to rewrite
the arc length integral as an integral of the Hilbert form, where
$\Gamma=(\gamma,\dot{\gamma})$ is the curve on $TQ\setminus Q$
defined by a regular curve~$\gamma$ on $Q$:
\[ L(\gamma)=\int_a^b F\bigl(\gamma(t),\dot{\gamma}(t)\bigr)\,\rmd t=
\int_a^b\frac{\partial F}{\partial v^i}
\bigl(\gamma(t),\dot{\gamma}(t)\bigr)\,\dot{\gamma}^i(t)\,\rmd t=
\int_{\Gamma}\beta.\]

It is well known that the Hilbert form $\beta$ is a contact
form on the level sets of $F$ (or on the projectivised tangent bundle),
see~\cite[Chapter~8]{ccl99}. However, for the relation with
symplectic geometry, it seems to be more convenient to work with
the $1$-form
\[ \alpha:=F\beta=\frac{\partial G}{\partial v^i}\,
\rmd q^i.\]
We are going to show that $\alpha$ is a contact form on each level set
of~$F$, and hence so is $\beta$.

\begin{prop}
The $2$-form $\rmd\alpha$ is a symplectic form on $TQ\setminus Q$.
\end{prop}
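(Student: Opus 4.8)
The plan is to compute $\rmd\alpha$ directly in the local coordinates $(q^i,v^i)$ on $TQ\setminus Q$; closedness is automatic since $\rmd\alpha$ is exact, so the entire task reduces to non-degeneracy. Differentiating $\alpha=(\partial G/\partial v^i)\,\rmd q^i$ yields
\[ \rmd\alpha=\frac{\partial^2 G}{\partial q^j\,\partial v^i}\,\rmd q^j\wedge\rmd q^i
+\frac{\partial^2 G}{\partial v^j\,\partial v^i}\,\rmd v^j\wedge\rmd q^i,\]
with no $\rmd v\wedge\rmd v$ term, since $\alpha$ has no $\rmd v$-component. The key object is the symmetric matrix $g_{ij}:=\partial^2 G/\partial v^i\,\partial v^j$, which is precisely the Hessian appearing in condition~(iv); in particular it is positive definite, hence invertible, at every point of $TQ\setminus Q$.

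For non-degeneracy I would take a tangent vector $X=a^k\partial_{q^k}+b^k\partial_{v^k}$ in the kernel of $\rmd\alpha$ and contract. The $\rmd v^j$-component of $X\ip\rmd\alpha$ equals $-g_{ji}a^i$, and its vanishing for all $j$ forces $a=0$ because $(g_{ij})$ is invertible. Substituting $a=0$, the remaining $\rmd q^i$-component of $X\ip\rmd\alpha$ is $g_{ji}b^j$, whence $b=0$ as well. Thus $X=0$, so $\rmd\alpha$ is a non-degenerate closed $2$-form on the $2n$-dimensional manifold $TQ\setminus Q$, i.e.\ a symplectic form.

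I do not expect any genuine obstacle here: the single substantive input is condition~(iv), used exactly to invert the fibrewise Hessian of~$G$, and everything else is bookkeeping with the interior product. The conceptual reason behind the calculation, to be taken up in Section~\ref{section:dual}, is that the fibre derivative $(\bfq,\bfv)\mapsto\bigl(\bfq,\partial G/\partial v^i\bigr)$ is a map $TQ\setminus Q\to T^*Q$ pulling back the canonical $1$-form to $\alpha$; hence $\rmd\alpha$ is the pullback of the canonical symplectic form, and it is symplectic precisely when this Legendre map is an immersion — which once more amounts to invertibility of $(g_{ij})$. For the present statement, however, the direct computation above is the quickest route.
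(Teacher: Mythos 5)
Your proof is correct and follows essentially the same route as the paper: compute $\rmd\alpha$ in the coordinates $(q^i,v^i)$ and observe that the only substantive input is the positive definiteness (hence invertibility) of the fibrewise Hessian $\bigl(\partial^2G/\partial v^i\partial v^j\bigr)$ from condition~(iv). The only cosmetic difference is that you verify non-degeneracy by a kernel argument ($X\ip\rmd\alpha=0\Rightarrow X=0$), whereas the paper notes that in $(\rmd\alpha)^n$ only the $\rmd v^j\wedge\rmd q^i$ summand can contribute, so $(\rmd\alpha)^n$ is a volume form by~(iv); both are routine linear algebra on the same expression.
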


\begin{proof}
We compute
\[ \rmd\alpha=\frac{\partial^2G}{\partial v^i\,\partial v^j}\,
\rmd v^j\wedge\rmd q^i+
\frac{\partial^2G}{\partial v^i\,\partial q^j}\,
\rmd q^j\wedge\rmd q^i.\]
For the $n$-fold wedge product $(\rmd\alpha)^n$, only the
first summand is relevant, and $\rmd\alpha$ being symplectic is
seen to be a consequence of the convexity condition~(iv).
\end{proof}

A computation as above shows that the fibrewise radial vector field $V:=v^i
\frac{\partial}{\partial v^i}$ is independent of the choice of local
coordinates. This also follows from the next lemma.

\begin{lem}
The radial vector field $V$ satisfies $V\ip\rmd\alpha=\alpha$
and $V\ip\rmd\beta=0$.
\end{lem}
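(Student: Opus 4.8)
The plan is to compute both contractions directly from the explicit formula for $\rmd\alpha$ in the preceding proposition, the key input being Euler's theorem on (positively) homogeneous functions.

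First I would contract the vertical vector field $V=v^k\partial/\partial v^k$ into $\rmd\alpha$. Since $V$ has no $\partial/\partial q^i$-component, it annihilates every term of the shape $(\cdots)\,\rmd q^j\wedge\rmd q^i$, while $V\ip(\rmd v^j\wedge\rmd q^i)=v^j\,\rmd q^i$. This leaves
\[ V\ip\rmd\alpha=\frac{\partial^2G}{\partial v^i\,\partial v^j}\,v^j\,\rmd q^i.\]
Now $G=F^2/2$ is positively homogeneous of degree $2$ in $\bfv$ by condition (iii), so $\partial G/\partial v^i$ is positively homogeneous of degree $1$; Euler's theorem applied to it — equivalently, differentiating the identity $v^j\,\partial G/\partial v^j=2G$ in $v^i$ — gives $v^j\,\partial^2G/\partial v^i\,\partial v^j=\partial G/\partial v^i$. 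Substituting this back shows $V\ip\rmd\alpha=(\partial G/\partial v^i)\,\rmd q^i=\alpha$.

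Next I would treat $\beta=(\partial F/\partial v^i)\,\rmd q^i$ by exactly the same contraction, obtaining $V\ip\rmd\beta=(\partial^2F/\partial v^i\,\partial v^j)\,v^j\,\rmd q^i$. Here $F$ is positively homogeneous of degree $1$, so $\partial F/\partial v^i$ is homogeneous of degree $0$, and Euler's theorem now forces $v^j\,\partial^2F/\partial v^i\,\partial v^j=0$; hence $V\ip\rmd\beta=0$.

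I do not expect any genuine obstacle here; the one point worth care is that (iii) only gives homogeneity for $\rho\in\R^+$, so one must invoke the version of Euler's theorem for \emph{positively} homogeneous functions, which is exactly what is available on $TQ\setminus Q$. A cleaner packaging, which avoids writing out $\rmd\alpha$ altogether, is to note that the flow of $V$ is the fibrewise scaling $(\bfq,\bfv)\mapsto(\bfq,\rme^t\bfv)$; under it $\alpha$ pulls back to $\rme^t\alpha$ and $\beta$ to $\beta$, so $\mathcal{L}_V\alpha=\alpha$ and $\mathcal{L}_V\beta=0$, and since neither $\alpha$ nor $\beta$ involves any $\rmd v^i$ we have $V\ip\alpha=V\ip\beta=0$; Cartan's formula $\mathcal{L}_V\eta=\rmd(V\ip\eta)+V\ip\rmd\eta$ then yields both identities at once.
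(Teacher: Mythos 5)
Your proof is correct and follows essentially the same route as the paper: contract $V$ into the local-coordinate expression for $\rmd\alpha$ (resp.\ $\rmd\beta$) and invoke Euler's theorem for the positively homogeneous functions $\partial G/\partial v^i$ (degree $1$) and $\partial F/\partial v^i$ (degree $0$), which is exactly the identity $v^k\,\partial^2G/\partial v^i\partial v^k=\partial G/\partial v^i$ and its analogue $v^k\,\partial^2F/\partial v^i\partial v^k=0$ used in the paper. Your closing remark, deriving both identities at once from the scaling flow of $V$ together with Cartan's formula and $V\ip\alpha=V\ip\beta=0$, is a correct and slightly slicker repackaging of the same homogeneity input.
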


\begin{proof}
Euler's theorem on homogeneous functions gives us
\[ v^k\,\frac{\partial G}{\partial v^k}=2G.\]
Differentiating this equation with respect to $v^i$, we find
\[ v^k\,\frac{\partial^2G}{\partial v^i\,\partial v^k}=
\frac{\partial G}{\partial v^i}.\]
Now we compute
\begin{eqnarray*}
V\ip\rmd\alpha & = & v^k\,\frac{\partial}{\partial v^k}\ip
                     \frac{\partial^2G}{\partial v^i\,\partial v^j}
                     \,\rmd v^j\wedge\rmd q^i\\
 & = & v^k\,\frac{\partial^2G}{\partial v^i\partial v^k}\,
                     \rmd q^i\\
 & = & \frac{\partial G}{\partial v^i}\,\rmd q^i
       \: = \; \alpha.
\end{eqnarray*}

Based on the corresponding identity
\begin{equation}
\label{eqn:F-hom}
v^k\,\frac{\partial^2F}{\partial v^i\,\partial v^k}=0
\end{equation}
for $F$, the computation showing that $V\ip\rmd\beta=0$ is similar.
\end{proof}

With the Cartan formula for the Lie derivative
it follows that $V$ is a \emph{Liouville vector field} with respect
to the symplectic form $\rmd\alpha$, that is, $L_{V}\rmd\alpha=
\rmd\alpha$. Moreover, the homogeneity condition (iii) guarantees
(by Euler's theorem) that $V$ is transverse to the level sets of~$F$.
This entails that $\alpha$ induces a contact form on each level set,
cf.~\cite[Lemma~1.4.5]{geig08}.

Write $M$ for the level set $F^{-1}(1)$. On this level set we have
the contact form $\alpha|_{TM}=\beta|_{TM}$ with Reeb vector field~$R$.
Since $V\ip\rmd\beta=0$, the
kernel of the $2$-form $\rmd\beta|M$ is spanned by $R$ and~$V$.
Also, we have $\beta(V)=0$.
Hence, the Reeb vector field $R$ of $\beta|_{TM}$ can equivalently be
defined as follows. Let $\wtR$ be a section of $T(TQ)$ along $M$
such that
\begin{equation}
\label{eqn:wtR}
\wtR\ip\rmd\beta=0\;\;\text{on}\;\; T(TQ)|_M\;\;\;\text{and}\;\;\
\beta(\wtR)=1.
\end{equation}
Then $R$ equals the projection of $\wtR$ onto $TM$ along the line bundle
$\langle V\rangle$.

\begin{lem}
A local solution $\wtR$ of (\ref{eqn:wtR}) is given by
\[ \wtR=v^i\,\frac{\partial}{\partial q^i}+
a^i\,\frac{\partial}{\partial v^i},\]
where $(a^1,\ldots,a^n)$ is a solution of the equations
\begin{equation}
\label{eqn:a^i}
v^k\,\frac{\partial^2F}{\partial v^i\,\partial q^k}-
v^k\,\frac{\partial^2F}{\partial v^k\,\partial q^i}+
a^k\,\frac{\partial^2F}{\partial v^i\,\partial v^k}=0,\;\;\; i=1,\ldots,n.
\end{equation}
\end{lem}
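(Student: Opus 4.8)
The plan is to substitute the proposed ansatz $\wtR=v^i\,\partial/\partial q^i+a^i\,\partial/\partial v^i$ into the two defining equations~(\ref{eqn:wtR}) and read off what they demand of the coefficients $a^i$. The normalisation comes for free: since $\beta=(\partial F/\partial v^i)\,\rmd q^i$, the ansatz gives $\beta(\wtR)=v^i\,\partial F/\partial v^i$, which by Euler's theorem applied to the homogeneous function $F$ (condition~(iii)) equals $F$, hence $1$ on $M=F^{-1}(1)$. So choosing the $q$-components of $\wtR$ to be the $v^i$ secures $\beta(\wtR)=1$ irrespective of the $a^i$.

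For the first equation in~(\ref{eqn:wtR}) I would compute $\wtR\ip\rmd\beta$. Just as in the proof of the Proposition, with $G$ replaced by $F$,
\[ \rmd\beta=\frac{\partial^2F}{\partial v^i\,\partial v^j}\,\rmd v^j\wedge\rmd q^i+\frac{\partial^2F}{\partial v^i\,\partial q^j}\,\rmd q^j\wedge\rmd q^i.\]
Contracting with $\wtR$ and sorting the terms, the coefficient of $\rmd v^j$ in $\wtR\ip\rmd\beta$ is $-v^i\,\partial^2F/\partial v^i\partial v^j$, which vanishes identically by the homogeneity relation~(\ref{eqn:F-hom}) and the symmetry of the Hessian; and the coefficient of $\rmd q^i$ is precisely the left-hand side of~(\ref{eqn:a^i}). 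Hence $\wtR\ip\rmd\beta$ vanishes on all of $TQ\setminus Q$ (in particular along $M$) if and only if the $a^i$ solve~(\ref{eqn:a^i}).

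It then remains to check that~(\ref{eqn:a^i}) really does admit a local solution. This is a linear system for $(a^1,\ldots,a^n)$ whose coefficient matrix is the fibrewise Hessian $\bigl(\partial^2F/\partial v^i\partial v^k\bigr)$ of $F$, and it is \emph{singular}: by~(\ref{eqn:F-hom}) the vector $(v^1,\ldots,v^n)$ lies in its kernel. One should first record that this matrix is in fact positive semidefinite with kernel \emph{exactly} the line through $(v^i)$ on $TQ\setminus Q$. This follows from the identity $\partial^2G/\partial v^i\partial v^j=(\partial F/\partial v^i)(\partial F/\partial v^j)+F\,\partial^2F/\partial v^i\partial v^j$ by restricting to the hyperplane on which the first term vanishes and invoking the positive-definiteness of the Hessian of $G$ from~(iv), together with~(\ref{eqn:F-hom}) and the transversality $v^k\,\partial F/\partial v^k=F>0$. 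Since the matrix is symmetric, solvability of~(\ref{eqn:a^i}) is then equivalent to the right-hand side being orthogonal to $(v^i)$; and this holds, because differentiating Euler's identity $v^k\,\partial F/\partial v^k=F$ with respect to $q^i$ gives $v^k\,\partial^2F/\partial v^k\partial q^i=\partial F/\partial q^i$, whence $v^i\bigl(v^k\,\partial^2F/\partial v^k\partial q^i-v^k\,\partial^2F/\partial v^i\partial q^k\bigr)=0$ is immediate. Thus a smooth solution $a^i$ exists, unique up to adding a multiple of $(v^i)$; that ambiguity changes $\wtR$ only by a multiple of the radial field $V$ and so does not affect the Reeb field $R$ obtained by projecting along $\langle V\rangle$.

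The interior-product computation in the second paragraph is purely mechanical; the step carrying real content is the solvability analysis in the third paragraph, where one must pin down the kernel of the degenerate Hessian of $F$ and verify the resulting compatibility condition. As indicated, this rests only on Euler's theorem and the convexity condition~(iv), so nothing beyond what the excerpt already supplies is needed.
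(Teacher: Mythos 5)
Your proof is correct and follows essentially the same route as the paper: check $\beta(\wtR)=1$ via Euler's theorem, reduce $\wtR\ip\rmd\beta=0$ to the linear system (\ref{eqn:a^i}) using the coordinate expression for $\rmd\beta$ and (\ref{eqn:F-hom}), and establish solvability by identifying the kernel and image of the degenerate fibrewise Hessian of $F$ from the convexity of $G$. The only differences are cosmetic: you pin down the kernel via positive semidefiniteness rather than the paper's rank count, and you make explicit the compatibility check that the inhomogeneous term is orthogonal to $(v^i)$, which the paper leaves implicit.
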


\begin{proof}
We first convince ourselves that (\ref{eqn:a^i}) has
a solution. Along $M=F^{-1}(1)$ we have
\[ \frac{\partial^2G}{\partial v^i\,\partial v^k}=
\frac{\partial^2F}{\partial v^i\,\partial v^k}+
\frac{\partial F}{\partial v^i}\,\frac{\partial F}{\partial v^k}.\]
Write $F_i=\partial F/\partial v^i$ for brevity, similar for
higher derivatives. The matrix
\[ (F_iF_k)=(F_1,\ldots,F_n)^{\ttt}(F_1,\ldots,F_n)\]
has rank~$1$. The matrix $(G_{ik})$ has rank $n$ by the
convexity condition~(iv). It follows that $(F_{ik})$
has rank at least equal to~$n-1$. Since the vector
$(v^1,\ldots,v^n)$ lies in the kernel of this last matrix,
its rank equals $n-1$, and the image of this symmetric matrix
is the subspace of $\R^n$ orthogonal to $(v^1,\ldots,v^n)$.
It follows that (\ref{eqn:a^i}) indeed has a solution
$(a^1,\ldots,a^n)$, unique up to adding multiples of
$(v^1,\ldots,v^n)$.

Given such a solution, let $\wtR$ be as defined in the lemma.
We first check
\[ \beta(\wtR)=v^i\,\frac{\partial F}{\partial v^i}=F=1\;\text{on $M$}.\]
With the help of (\ref{eqn:F-hom}) and (\ref{eqn:a^i})
one easily verifies the first condition in~(\ref{eqn:wtR}).
\end{proof}

Consequently, the Reeb vector field $R$ is of the form
\[ R=v^i\,\frac{\partial}{\partial q^i}+b^i\,\frac{\partial}{\partial v^i}.\]
This means that Reeb orbits in $M\subset TQ$
are of the form $(\gamma,\dot{\gamma})$,
where $\gamma$ is a unit speed trajectory in~$Q$.

\begin{prop}
The unit speed geodesics on $Q$ with respect to the Finsler metric $F$
are precisely the projections of Reeb orbits on~$M$.
\end{prop}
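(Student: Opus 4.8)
The plan is to show that the second-order ODE satisfied by Reeb orbits on $M$ coincides with the Euler--Lagrange equations for the length functional $L$. First I would recall from the previous lemmas that a Reeb orbit on $M=F^{-1}(1)$ has the form $(\gamma,\dot\gamma)$ with $\dot\gamma=\bfv$ a unit-speed curve, and that its defining equation is carried by $\wtR$, namely the system~\eqref{eqn:a^i} for $b^i$ (equivalently $a^i$) together with $\dot v^i=b^i$. So along a Reeb orbit we have
\[
v^k\,\frac{\partial^2F}{\partial v^i\,\partial q^k}-
v^k\,\frac{\partial^2F}{\partial v^k\,\partial q^i}+
\dot v^k\,\frac{\partial^2F}{\partial v^i\,\partial v^k}=0 ,
\]
with $v^i=\dot\gamma^i$; this is the concrete ODE I must match against the geodesic equation.

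Next I would write down the Euler--Lagrange equations for $L(\gamma)=\int F(\gamma,\dot\gamma)\,\rmd t$. These read
\[
\frac{\rmd}{\rmd t}\!\left(\frac{\partial F}{\partial v^i}\right)
-\frac{\partial F}{\partial q^i}=0 ,
\]
and expanding the total $t$-derivative gives
\[
\frac{\partial^2F}{\partial v^i\,\partial q^k}\,\dot\gamma^k
+\frac{\partial^2F}{\partial v^i\,\partial v^k}\,\ddot\gamma^k
-\frac{\partial F}{\partial q^i}=0 .
\]
I then need to reconcile this with the displayed Reeb ODE above, where $v^k=\dot\gamma^k$ and $\dot v^k=\ddot\gamma^k$. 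Comparing, the two systems agree provided
\[
-v^k\,\frac{\partial^2F}{\partial v^k\,\partial q^i}=-\frac{\partial F}{\partial q^i},
\]
and this is exactly Euler's theorem on homogeneous functions applied to the first derivative $\partial F/\partial q^i$, which is homogeneous of degree $1$ in $\bfv$ by condition~(iii). (Concretely: differentiate $v^k(\partial F/\partial v^k)=F$ with respect to $q^i$.) Hence the Reeb ODE on $M$ is literally the Euler--Lagrange equation for $L$, with the unit-speed normalisation $F(\gamma,\dot\gamma)=1$ automatically preserved because $F$ is a first integral of the Reeb flow (it is constant on $M$ and the Reeb field is tangent to $M$). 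Conversely, any unit-speed geodesic $\gamma$ lifts to $(\gamma,\dot\gamma)$ landing in $M$, and the computation run backwards shows $(\gamma,\dot\gamma)$ solves the defining equations for $R$; uniqueness of integral curves (for the geodesic vector field and for the Reeb field, respectively) then gives the bijection between unit-speed geodesics and Reeb orbits.

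The main obstacle I anticipate is the bookkeeping around the ambiguity in $\wtR$: the solution $(a^i)$ of~\eqref{eqn:a^i} is only unique modulo multiples of $(v^1,\dots,v^n)$, and $R$ is obtained from $\wtR$ by projecting along $\langle V\rangle$, so I should check that this projection does not disturb the $q$-component and that the resulting $b^i$ genuinely satisfies $\dot v^i=b^i$ along the orbit — i.e.\ that the projection is compatible with the identification $v^i=\dot\gamma^i$. A secondary, purely cosmetic point is that the Euler--Lagrange equations as usually stated presuppose a distinguished parametrisation; since $F$ is positively homogeneous of degree $1$, the functional is parametrisation-invariant, so I should phrase the conclusion in terms of \emph{oriented} geodesics parametrised by arc length, which is precisely the normalisation built into $M=F^{-1}(1)$. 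Neither point is deep, but stating them cleanly is where care is needed.
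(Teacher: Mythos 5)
Your plan is correct in substance, but it takes a genuinely different route from the paper. The paper never writes down the Euler--Lagrange equations: it expresses the first variation of $L$ through the Hilbert form, $\frac{\rmd}{\rmd s}|_{s=0}L(\gamma_s)=\int_a^b\rmd\beta(X,\dot{\Gamma})\,\rmd t$, observes that this vanishes for Reeb trajectories (which, by the form of $R$, are lifts $(\gamma,\dot{\gamma})$ of unit speed curves), so every Reeb orbit projects to a geodesic, and then gets the converse purely from the uniqueness of the directed unit-speed geodesic through each point of $M$. You instead compare the explicit second-order ODE encoded in (\ref{eqn:a^i}) with the Euler--Lagrange system; your key identity $v^k\,\partial^2F/\partial v^k\partial q^i=\partial F/\partial q^i$ is indeed Euler's theorem applied to $\partial F/\partial q^i$, which is homogeneous of degree one in $\bfv$, so the computation is sound. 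Your route makes the identification of the Reeb equation with the geodesic equation completely explicit in coordinates; the paper's route is softer and deliberately avoids the degeneracy of the Euler--Lagrange system, at the price of invoking the geodesic flow for the converse.

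The two points you flag do need to be closed, and both close quickly. First, the ambiguity in $(a^i)$: the projection of $\wtR$ along $\langle V\rangle$ is purely vertical, so $R=v^i\partial_{q^i}+b^i\partial_{v^i}$ with $(b^i)$ differing from a solution of (\ref{eqn:a^i}) by a multiple of $(v^1,\ldots,v^n)$; since that vector lies in the kernel of $(\partial^2F/\partial v^i\partial v^k)$ by (\ref{eqn:F-hom}), the components $(b^i)$ again satisfy (\ref{eqn:a^i}), so the ODE you display really is the Reeb equation. Second, and more substantively, the converse direction cannot simply be ``the computation run backwards'': because $(\partial^2F/\partial v^i\partial v^k)$ has rank $n-1$, the Euler--Lagrange equations for a unit-speed geodesic only yield $\ddot{\gamma}^k=b^k+c\,\dot{\gamma}^k$ for some function $c$, i.e.\ the velocity of the lift agrees with $R$ only modulo $\langle V\rangle$. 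You kill $c$ using the unit-speed normalisation: the lift $(\gamma,\dot{\gamma})$ stays in $M$, so its velocity is tangent to $M$, as is $R$, while $V$ is transverse to $M$; hence $c=0$ and the lift is an integral curve of $R$. (Alternatively you may sidestep the degeneracy entirely, as the paper does, by using only the forward implication together with uniqueness of the directed geodesic through each point of $M$.) With these two remarks made explicit, your argument is complete.
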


\begin{proof}
Let $\gamma\co [a,b]\rightarrow Q$ be a unit speed curve, and write
$\Gamma=(\gamma,\dot{\gamma})$ for the corresponding curve
in~$M$.
Consider a variation $\Gamma_s=(\gamma_s,\dot{\gamma}_s)$, $s\in
(-\varepsilon,\varepsilon)$,
of curves in $TQ\setminus Q$, fixed near the endpoints.
Write
\[ X=\frac{\rmd\Gamma_s}{\rmd s}|_{s=0}\]
for the variational vector field at $s=0$. Then
\[ \frac{\rmd}{\rmd s}|_{s=0}L(\gamma_s)=
\frac{\rmd}{\rmd s}|_{s=0}\int_a^b\Gamma_s^*\beta
=\int_{\Gamma}\bigl(\rmd(\beta(X))+X\ip\rmd\beta\bigr);
\]
see \cite[Lemma~B.1]{geig08} for the last equality.
The integral over the first summand vanishes since the
variation is fixed near the end points. So the condition for $\gamma$
to be a geodesic, that is, the vanishing of this first variation,
becomes
\[ \int_a^b\rmd\beta(X,\dot{\Gamma})\,\rmd t=0.\]

It would be precipitate to conclude that this forces $\dot{\Gamma}$
to equal (a multiple of) the Reeb vector field, since only a subclass of
vector fields arises as variational vector fields~$X$. 
However, for Reeb trajectories this first variation certainly
vanishes, even under general variations. Moreover, we have seen
that Reeb trajectories are of the form $(\gamma,\dot{\gamma})$,
with $\gamma$ of unit speed. This implies that every Reeb orbit
does indeed project to a Finsler geodesic.

Conversely, through every point $(\bfq,\bfv)\in M\subset TQ$
there is a unique curve projecting to a unit speed geodesic, so
this curve coincides with the Reeb orbit through that point.
\end{proof}

\begin{rem}
For a particular case of this result, the geodesic flow of a
\emph{Riemannian} manifold, a proof can be found in
\cite[Section~1.5]{geig08}. The proof
above is simpler, since it avoids computations in special local
coordinates.

A proof of the general case, using a different line of reasoning
and written in Portuguese, can be found in~\cite{hrsa13a}, where,
in Teorema~4.4.10, the Hilbert form is
identified with the pull-back of the canonical Liouville $1$-form
on the unit cotangent bundle, and the geodesic spray with
the Reeb vector field.
\end{rem}
\section{The dual viewpoint}
\label{section:dual}
In this section we expand a little on that last remark
and explain why Finsler geodesic flows have a more natural interpretation
in the cotangent bundle. We thank Felix Schlenk~\cite{schl16}
and the referees for useful suggestions regarding this issue.

On the cotangent bundle $T^*Q$ we choose local coordinates such that
\[ (q^1,\ldots,q^n,p_1,\ldots,p_n)=
\bigl(p_i\,\rmd q^i\bigr)_{(q^1,\ldots,q^n)}.\]
In these coordinates, and the adapted coordinates on $TQ$ as
in the preceding section, the Legendre transformation
$\mathcal{L}\co TQ\setminus Q\rightarrow T^*Q\setminus Q$
defined by the Lagrange function $G\co TQ\rightarrow\R$ is described by
\[ (q^1,\ldots,q^n,v^1,\ldots,v^n)\longmapsto
\Bigl(q^1,\ldots,q^n,p_1=\frac{\partial G}{\partial v^1},\ldots,
p_n=\frac{\partial G}{\partial v^n}\Bigl),\]
see \cite[8.3]{lima87}. The homogeneity and strong convexity of $G$
ensure that $\mathcal{L}$ is a diffeomorphism.
Under this diffeomorphism, the canonical Liouville form $\lcan=p_i\,\rmd q^i$
on $T^*Q$ pulls back to the contact form on $TQ$ that we called~$\alpha$.

\begin{rem}
In the Riemannian case, when $G=g_{ij}v^iv^j/2$, we have
$\partial G/\partial v^i=g_{ij}v^j$, so the Legendre transformation
coincides with the identification of $TQ$ with $T^*Q$ given by the metric.
For this case see also \cite[Theorem~1.5.2]{geig08}.
\end{rem}

The strict convexity of the unit ball $B_{\bfq}\subset T_{\bfq}Q$
with respect to~$F$,
\[ B_{\bfq}:=\bigl\{\bfv\in T_{\bfq}Q\co G(\bfq,\bfv)\leq 1/2\bigr\},\]
implies that for  $\bfv\in\partial B_{\bfq}$ and $\bfw$ a tangent vector
to the fibre $T_{\bfq}Q$ at $\bfv\in T_{\bfq}Q$ we have
\[ \frac{\partial G}{\partial v^i}\,w^i\leq
\frac{\partial G}{\partial v^i}\,v^i=2G=1,\]
with equality only for $\bfw=\bfv$ (interpreting $\bfw$ as an element of
$T_{\bfq}Q$). So the polar body $B_{\bfq}^*:=
\mathcal{L}(B_{\bfq})\subset T_{\bfq}^*Q$ is the set
\[ B_{\bfq}^*=\bigl\{\bfp\in T_{\bfq}^*Q\co
p_iv^i\leq 1\;\text{for all}\; \bfv\in B_{\bfq}\bigr\}.\]
Since $G$ is fibrewise homogeneous of degree~$2$, the Hamiltonian
function $G^*\co T^*Q\rightarrow\R$ corresponding to $G$ is
simply given by $G^*=G\circ\mathcal{L}^{-1}$, see~\cite[8.6]{lima87}.
In other words, $G^*$ takes the value $1/2$ on the boundary of $B_{\bfq}^*$
and is likewise fibrewise homogeneous of degree~$2$.

It now becomes perfectly obvious why the Finsler geodesic flow
is a Reeb flow. The Finsler geodesics are the solutions of the
Lagrangian system defined by the function $G$ on $TQ$, and
they transform under $\mathcal{L}$ to the solutions of the
Hamiltonian system on $T^*Q$ (equipped with the canonical symplectic form
$\ocan=\rmd p_i\wedge\rmd q^i$) defined by~$G^*$, see
\cite[Chapter~10]{lima87}. For the
following lemma cf.~\cite[Lemma~4.2]{fls15} and
\cite[Lemma~1.4.10]{geig08}.

\begin{lem}
The Hamiltonian flow of $G^*$ on the unit cotangent bundle
$M:=ST^*Q=\{G^*=1/2\}$ coincides with the Reeb flow
of the contact form $\lcan|_{TM}$ given by the restriction of the
Liouville form.
\end{lem}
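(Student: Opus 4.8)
The plan is to reduce this statement to the corresponding fact on the tangent bundle, which was established in the previous section, by transporting everything through the Legendre transformation $\mathcal{L}\co TQ\setminus Q\to T^*Q\setminus Q$. Recall from the preceding discussion that $\mathcal{L}$ is a diffeomorphism carrying $\alpha=(\partial G/\partial v^i)\,\rmd q^i$ to the canonical Liouville form $\lcan=p_i\,\rmd q^i$, and carrying $G$ to $G^*=G\circ\mathcal{L}^{-1}$. In particular $\mathcal{L}$ maps the level set $\{G=1/2\}\subset TQ$ (which is the same as $F^{-1}(1)$, the manifold we called $M$ in Section~\ref{section:Finsler}, since $G=F^2/2$) diffeomorphically onto $ST^*Q=\{G^*=1/2\}$.

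First I would record that $\mathcal{L}$ is a symplectomorphism from $(TQ\setminus Q,\rmd\alpha)$ to $(T^*Q\setminus Q,\ocan)$, since it pulls $\lcan$ back to $\alpha$ and hence $\ocan=\rmd\lcan$ back to $\rmd\alpha$. Because pullback of forms commutes with restriction to a submanifold, $\mathcal{L}$ restricted to $\{G=1/2\}$ pulls the contact form $\lcan|_{TM}$ back to the contact form $\alpha|_{T(F^{-1}(1))}$. A diffeomorphism intertwining two contact forms automatically intertwines their Reeb vector fields, since the Reeb field is characterised by equations that are natural under pullback ($R\ip\rmd\alpha=0$, $\alpha(R)=1$). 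Therefore the Reeb flow of $\lcan|_{TM}$ on $ST^*Q$ is the $\mathcal{L}$-image of the Reeb flow of $\alpha$ on $F^{-1}(1)$.

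Next I would invoke the Legendre correspondence between Lagrangian and Hamiltonian dynamics: the flow of the Lagrangian system defined by $G$ on $TQ$ transforms under $\mathcal{L}$ to the flow of the Hamiltonian system defined by $G^*$ on $T^*Q$ (with respect to $\ocan$), as cited from \cite[Chapter~10]{lima87}. Restricting to the relevant energy level $\{G=1/2\}\leftrightarrow\{G^*=1/2\}$, the $G^*$-Hamiltonian flow on $ST^*Q$ is the $\mathcal{L}$-image of the $G$-Lagrangian flow on $F^{-1}(1)$, which by Proposition~2.6 (and the identification of Reeb orbits with curves $(\gamma,\dot\gamma)$ preceding it) is exactly the Reeb flow of $\alpha|_{T(F^{-1}(1))}$. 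Chaining the two identifications through $\mathcal{L}$ gives the claim.

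The main obstacle, such as it is, is keeping the three flows genuinely aligned rather than merely having the same orbits: the Lagrangian flow, the Hamiltonian flow, and the Reeb flow must agree as parametrised flows, not just as foliations. This is where the homogeneity condition~(iii) does real work — it guarantees that $G$-energy $1/2$ corresponds to unit Finsler speed, so that all three flows are parametrised compatibly (the Reeb flow is the arc-length-parametrised geodesic flow, and the Hamiltonian flow of a fibrewise degree-two $G^*$ on its $1/2$-level has the same speed). Once the parametrisations are matched, the proof is a short formal chase; I expect the write-up to be a paragraph or two, mostly citing \cite[Chapter~10]{lima87}, Proposition~2.6, and the naturality of the Reeb vector field under the symplectomorphism $\mathcal{L}$.
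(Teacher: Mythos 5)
Your argument is correct, but it takes a genuinely different route from the paper. The paper proves the lemma entirely on the cotangent side in two lines: both $X_{G^*}|_M$ and the Reeb vector field of $\lcan|_{TM}$ span the kernel of $\ocan|_{TM}$, and with the fibrewise radial vector field $Y=p_i\,\partial/\partial p_i$, which satisfies $\lcan=Y\ip\ocan$, one computes $\lcan(X_{G^*})=\ocan(Y,X_{G^*})=\rmd G^*(Y)=2G^*=1$ on $M$ by Euler's theorem, so $X_{G^*}|_M$ satisfies the defining equations of the Reeb field. You instead transport the statement back to the tangent bundle via the Legendre transformation, using that $\mathcal{L}^*\lcan=\alpha$ (so Reeb flows correspond), the Lagrangian--Hamiltonian correspondence of flows from \cite[Chapter~10]{lima87}, and the proposition of Section~\ref{section:Finsler} identifying the Reeb flow of $\alpha|_{T(F^{-1}(1))}$ with the unit-speed geodesic flow; your attention to the parametrisation issue (energy $1/2$ versus unit Finsler speed) is exactly what makes this chain close up, and the fact that Reeb orbits there are of the form $(\gamma,\dot{\gamma})$ forces agreement of parametrised flows, not just orbits. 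The trade-off: the paper's computation is self-contained, needs only the fibrewise $2$-homogeneity of $G^*$ (so it works for any such Hamiltonian, independent of any Finsler or Lagrangian origin), and is in keeping with the section's theme that the cotangent picture is the natural one; your proof is longer and leans on the Section~\ref{section:Finsler} machinery and the cited Legendre duality of dynamics, but it has the virtue of making the consistency of the tangent and cotangent descriptions explicit. One small slip: the result you call Proposition~2.6 is the proposition on unit-speed geodesics and Reeb orbits (the last proposition of Section~\ref{section:Finsler}); also note that identifying the Euler--Lagrange flow of $G$ on $\{G=1/2\}$ with the unit-speed geodesic flow is an extra (standard) input beyond that proposition, which only speaks of geodesics and Reeb orbits, so it deserves the brief justification you sketch rather than a bare citation.
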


\begin{proof}
Both the Hamiltonian vector field $X_{G^*}|_M$ and the Reeb vector
field of $\lcan|_{TM}$ span the kernel of $\ocan|_{TM}$.
Write $Y=p_i\frac{\partial}{\partial p_i}$ for the fibrewise
radial vector field on $T^*Q$, so that $\lcan=Y\ip\ocan$. From
the computation
\[ \lcan(X_{G^*})=\ocan(Y,X_{G^*})=\rmd G^*(Y)=2G^*=1\;\text{on $M$},\]
where we have used the homogeneity of~$G^*$, we conclude
that $X_{G^*}|_M$ coincides with the Reeb vector field.
\end{proof}

Rescaling the contact form $\lcan|_{TM}$ by a positive function is
equivalent to taking the induced contact form on a fibrewise starshaped
hypersurface in $T^*Q$. In this way,
a large class of Reeb flows arises as a natural generalisation
of the Finsler \emph{cogeodesic} flow. 

Finsler (co-)metrics come from fibrewise strictly convex
hypersurfaces enclosing the zero section in $TQ$ and $T^*Q$, respectively.
Both viewpoints are equivalent under the Legendre transformation.
In the cotangent bundle, there is a natural extension of Finsler
cogeodesic flows to the rich theory of Reeb flows on fibrewise
starshaped hypersurfaces enclosing the zero section, see~\cite{fls15}.
Here the Legendre transformation
breaks down, so there is no correspondence with metrics on $Q$
defined by fibrewise starshaped
hypersurfaces in the tangent bundle.

Indeed, as pointed out by Busemann~\cite[p.~83]{buse55},
if one drops the condition that $\{F=1\}\cap T_{\bfq}Q$ be convex
in $T_{\bfq}Q$ for all~$\bfq\in Q$, the resulting metric geometry
has some awkward features. Given any function
$F\co TQ\rightarrow\R_0^+$ satisfying conditions (i) to (iii)
as in the preceding section, one can define the $F$-length
$L(\gamma)=\int F(\gamma,\dot{\gamma})$ of a (piecewise)
$C^1$-curve $\gamma$ in $Q$.
The distance between two points $\bfq_0,\bfq_1$ in $Q$
is naturally set to be the infimum over the $F$-length of all
$C^1$-curves joining $\bfq_0$ with~$\bfq_1$. With the help
of this distance function one can then talk about rectifiable
curves and their arc length obtained by polygonal approximation.
However, this arc length will coincide
with the $F$-length on all $C^1$-curves only if the
unit spheres in $T_{\bfq}Q$ with respect to $F$ are convex for
all $\bfq\in Q$.

To summarise, only when viewing Finsler geometry on the
cotangent rather than the tangent bundle do we perceive
a natural and fruitful generalisation of Finsler geodesic flows,
first to Reeb flows on fibrewise starshaped hypersurfaces, thence to
general contact manifolds.
\section{Existence vs.\ nonexistence}
\label{section:non-compact}
On non-compact contact manifolds, the Reeb vector field will not,
in general, have any periodic orbits. Write
\[ \alst=\rmd z+\frac{1}{2}\sum_{j=1}^n (x_j\,\rmd y_j-y_j\,\rmd x_j)\]
for the standard contact form on $\R^{2n+1}$, and $\xist=\ker\alst$
for the standard contact structure. The Reeb vector field
of $\alst$ is the coordinate vector field~$\partial_z$.

We also write $\xist=\ker\alst$ for the standard contact structure
and contact form, respectively,
on the sphere $S^{2n+1}\subset\R^{2n+2}$, defined by
\[ \alst=\sum_{j=1}^{n+1}(x_j\,\rmd y_j-y_j\,\rmd x_j).\]
This notational duplication is justified by the fact that
the two contact structures $\xist$ on $\R^{2n+1}$ and
on the complement of a point in $S^{2n+1}$, respectively,
are diffeomorphic, see~\cite[Proposition~2.1.8]{geig08}.
The Reeb vector field of $\alst$ on $S^{2n+1}$ is given by
\[ \sum_{j=1}^{n+1}(x_j\partial_{y_j}-y_j\partial_{x_j}),\]
so all Reeb orbits are periodic, and they define
the generalised Hopf fibration $S^1\hookrightarrow S^{2n+1}\rightarrow
\CP^n$.

Going back to $(\R^{2n+1},\alst)$,
it is natural to ask whether local changes in the topology
or the contact form force the existence of periodic Reeb orbits.
One very simple way to make a local change in the contact form,
but no change in the topology, that will produce a periodic Reeb
orbit is to take a contact connected sum with $(S^{2n+1},\alst)$
inside a small ball in $\R^{2n+1}$; see
also the next section. For the definition of contact surgery we refer
the reader to \cite{wein91} and \cite[Chapter~6]{geig08}.

An essential tool for finding or excluding periodic Reeb orbits
are holomorphic discs. To start with the most simple example,
consider the cylinder
\[ Z=D^2\times\R=\{x^2+y^2\leq 1\} \subset \R^3.\] 
Let $\alpha$ be a contact form on $\R^3$ that coincides with
$\alst$ outside a small (not necessarily round)
ball $B$ contained in~$Z$. Equip the
symplectisation $(\R\times\R^3, \omega:=\rmd(\rme^t\alpha))$ with
an almost complex structure $J$ which is tamed by $\omega$,
that is, $\omega(X,JY)$ defines a $J$-invariant Riemannian metric,
and which satisfies $J(\partial_t)=R_{\alpha}$. Away from $\R\times B$
we take $J$ to be the standard complex structure $J(\partial_x)=\partial_y$,
$J(\partial_t)=\partial_z$. Then the domain
$(-\infty,0]\times Z$ has a piecewise smooth strictly
pseudoconvex boundary.

One now studies holomorphic discs
\[ (a,f)\co \D^2\longrightarrow
\R\times Z\]
whose boundary map $\partial\D^2\rightarrow\{0\}\times\partial Z$ is of
degree one.
Away from $\R\times B$, we have the obvious holomorphic
discs $\{0\}\times D^2\times\{z\}$.
Standard methods on pseudoholomorphic curves in
symplectisations~\cite{hofe93} and the filling with holomorphic
discs \cite{elia90,grom85} then show the following alternative: either
\begin{itemize}
\item[(i)] the foliation by standard discs away from $\R\times B$ extends to
a foliation of an embedded cylinder in $(-\infty,0]\times Z$, or
\item[(ii)] there is a sequence of holomorphic discs whose
gradient explodes.
\end{itemize}

This particular situation was investigated by Eliashberg--Hofer~\cite{elho94}.
In case (i), they show that the foliation by holomorphic discs
projects to a foliation of the cylinder $\{0\}\times Z$ by
embedded discs. By the definition of~$J$, the Reeb orbits will
be transverse to these discs, so there can be neither a periodic
nor a trapped Reeb orbit (i.e.\ an orbit that enters the ball $B$
but does not leave it). In fact, Eliashberg--Hofer show that $\alpha$
is then diffeomorphic to~$\alst$. In case (ii), again by the arguments
of~\cite{hofe93}, one finds a periodic Reeb orbit.

The fact that $\alpha$ is diffeomorphic to $\alst$ in the absence of
periodic Reeb orbits can be interpreted as a global Darboux theorem.
Another way to phrase this conclusion is as follows.

\begin{thm}[Eliashberg--Hofer]
\label{thm:EH}
Let $\alpha$ be a contact form on $\R^3$ that equals the standard form
$\alst$ outside a compact set. If the Reeb vector field of $\alpha$
has a trapped orbit, then it also has a periodic orbit.\qed
\end{thm}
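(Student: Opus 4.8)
The plan is to argue by contradiction, following exactly the dichotomy (i) vs.\ (ii) that the text has already set up in the cylinder $Z=D^2\times\R\subset\R^3$. First I would fix the compact set outside which $\alpha=\alst$, and after a diffeomorphism (shifting in the $z$-direction and rescaling in the $(x,y)$-directions) arrange that this compact set lies inside a small ball $B$ contained in the interior of $Z$, so that the trapped orbit also stays inside $Z$. Then I would set up the symplectisation $(\R\times\R^3,\omega=\rmd(\rme^t\alpha))$ with an almost complex structure $J$ that is $\omega$-tame, satisfies $J(\partial_t)=R_\alpha$, and agrees with the standard $J$ (with $J(\partial_x)=\partial_y$, $J(\partial_t)=\partial_z$) away from $\R\times B$; as noted in the excerpt, this makes $(-\infty,0]\times Z$ a domain with piecewise-smooth strictly pseudoconvex boundary, and the standard discs $\{0\}\times D^2\times\{z\}$ are honest $J$-holomorphic discs in the region where $J$ is standard.

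Next I would invoke the Gromov--Eliashberg filling argument: start the filling from the standard discs near the top of $Z$ and propagate it downward, obtaining a one-parameter family of embedded $J$-holomorphic discs with boundary on $\{0\}\times\partial Z$ of degree one. By Hofer's compactness results for holomorphic curves in symplectisations, together with the monotonicity/area bounds coming from the fixed boundary homology class, the only way the family can fail to continue is a gradient blow-up, and in that case Hofer's bubbling analysis produces a finite-energy plane whose asymptotic limit is a periodic orbit of $R_\alpha$ — this is case (ii), and it already gives the desired periodic orbit. So I may assume we are in case (i): the standard foliation extends to a foliation of an embedded (topological) cylinder in $(-\infty,0]\times Z$ by $J$-holomorphic discs, all of whose boundaries lie on $\{0\}\times\partial Z$.

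Then, following Eliashberg--Hofer, I would project this foliation to $\{0\}\times Z\cong Z$: the leaves project to embedded discs foliating $Z$, and because $J(\partial_t)=R_\alpha$ the Reeb vector field is everywhere transverse to these discs (a tangency would force $R_\alpha$ to have a $\partial_t$-component, contradicting $\alpha(R_\alpha)=1$ via the tameness of $J$). Transversality of a nowhere-zero flow to a foliation of $D^2\times\R$ by discs means the flow is a graph over the $\R$-factor, so every orbit is properly embedded and exits through both ends of the cylinder; in particular no orbit can be trapped in $B$, and none can be periodic. This contradicts the hypothesis that $R_\alpha$ has a trapped orbit, so case (ii) must occur and we are done.

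The main obstacle is making the dichotomy rigorous, i.e.\ justifying that the filling either extends globally along the whole cylinder or else blows up, and that the extended foliation in case (i) is genuinely embedded and projects nicely. This rests on (a) a priori energy bounds for the discs from the fixed boundary class and the exactness of $\omega$ on $\R\times B$ away from which $J$ is standard, (b) Hofer's compactness and the absence of bubbling of closed spheres or of discs with boundary leaving $\partial Z$ (using strict pseudoconvexity of $\partial(\,(-\infty,0]\times Z\,)$ to push boundaries back and the maximum principle in the $t$-coordinate to confine the discs), and (c) automatic transversality / positivity of intersections in the $4$-dimensional symplectisation to keep the discs embedded and pairwise disjoint. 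All of this is precisely the content of \cite{hofe93,elho94,grom85,elia90}, so I would cite those for the technical heart and restrict the present argument to assembling the pieces and drawing the dynamical conclusion; I would not reprove the compactness theorem.
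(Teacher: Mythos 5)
Your proposal follows essentially the same route as the paper, which itself only sketches the argument and delegates the technical heart to Eliashberg--Hofer and Hofer: the filling of $(-\infty,0]\times Z$ by holomorphic discs with the dichotomy ``foliation extends'' versus ``gradient blow-up'', with blow-up yielding a periodic orbit and the extended foliation projecting to a disc foliation of $Z$ transverse to $R_{\alpha}$, which excludes trapped (and periodic) orbits. The assembly and the citations you give match the paper's treatment, so no further comparison is needed.
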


Although, in higher dimensions, a foliation by holomorphic discs
is less restrictive for the Reeb dynamics, it was expected
that an analogous result might hold there.
This was refuted in~\cite{grz14}, even under the additional
restriction that the contact structure $\xist$ remain unchanged.

\begin{thm}
There is a deformation of $\alst$ on $\R^{2n+1}$, $n\geq 2$,
into a contact form $\alpha=h\alst$, with $h\co\R^{2n+1}\rightarrow\R^+$
and $h-1$ compactly supported, such that $R_{\alpha}$ has trapped, but
no periodic orbits.
\qed
\end{thm}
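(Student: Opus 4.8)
The plan is to construct the deformation explicitly by interpolating between $\alst$ and a suitably chosen contact form whose Reeb flow is that of an irrational ellipsoid, modified on a compact set so that one Reeb orbit becomes trapped while no closed orbit survives. First I would pass to the model $(S^{2n+1},\alst)$, where $\alst=\sum_j(x_j\,\rmd y_j-y_j\,\rmd x_j)$ and the Reeb flow is the Hopf flow, and recall that scaling the coordinates by positive weights $w_1,\dots,w_{n+1}$ yields the ellipsoid contact form $\alpha_w$ whose Reeb flow is the weighted (Reeb) flow; for rationally independent weights the only closed orbits are the $n+1$ principal circles $\{x_k=y_k=0\text{ for }k\neq j\}$. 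On the complement of a point — i.e.\ on $(\R^{2n+1},\xist)$ — deleting a principal circle removes one of these, and by choosing the weights generically and the point of $S^{2n+1}$ to lie on one such circle, exactly $n$ periodic orbits remain, all passing through a fixed compact region. So the first step gives a contact form agreeing with $\alst$ only near infinity and possessing finitely many periodic orbits, all in a ball.

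Second, I would localise the surgery: inside a ball $B$ containing those $n$ residual periodic orbits, I want to perturb $h$ so that each of these circles is broken, while no new closed orbit is created. The mechanism is the standard one for breaking a closed Reeb orbit by a $C^\infty$-small perturbation of the contact form supported near the orbit — compose the Reeb flow's return map with a perturbation so that it has no fixed point — but one must take care that the perturbations for the $n$ orbits do not interact and that trapping genuinely occurs. Here I would invoke the holomorphic-disc filling picture sketched before Theorem~\ref{thm:EH}, but run in reverse: the complement of $B$ in $\R^{2n+1}$ carries the standard foliation by holomorphic discs, so any orbit that leaves $B$ through the boundary is pushed off to infinity transversally to that foliation and can never return; hence an orbit that enters $B$ but is not periodic is automatically trapped. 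Thus it suffices to arrange, inside $B$, that at least one orbit enters and stays, and that none is closed.

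Third, I would assemble the homotopy: $h_t=(1-t)+t\,h$ is a path of positive functions from $1$ to $h$ with $h_t-1$ compactly supported, and $h_t\alst$ is a contact form for every $t$ since $\alst\wedge(\rmd\alst)^n$ is a volume form and positivity of $h_t$ together with the product rule keeps $(h_t\alst)\wedge(\rmd(h_t\alst))^n$ positive — this is immediate and needs no ingenuity. The real content is in the second step. The main obstacle I anticipate is ensuring \emph{simultaneously} that (a) the perturbation destroys every periodic orbit of $h\alst$ — not merely the short ones, since longer orbits and orbits created by the perturbation itself must also be controlled — and (b) at least one orbit is trapped rather than escaping. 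Part (a) is a transversality/genericity argument: the space of contact forms equal to $\alst$ near infinity for which the Reeb flow has no closed orbit meeting $B$ should be shown to be residual, or else one argues directly that the weighted-ellipsoid model already has no orbits outside the $n+1$ principal circles and that a single well-chosen perturbation kills all $n$ survivors at once because they are pairwise disjoint and each lies in its own coordinate plane. Part (b) requires exhibiting a concrete incoming orbit, which one can do by a local normal-form computation near $\partial B$ showing that the Reeb vector field of $h\alst$ points inward along part of $\partial B$ while the escape foliation blocks return; this is where the argument of~\cite{grz14} does its work, and I would cite it for the detailed construction rather than reproduce it.
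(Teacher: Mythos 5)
Your proposal does not establish the theorem; the two places where you yourself locate the difficulty, (a) and (b), are exactly where the argument breaks down, and neither is repaired. For (a): absence of periodic orbits is emphatically \emph{not} a residual property of contact forms -- genericity arguments give nondegeneracy of closed orbits, never their nonexistence -- so the proposed ``transversality/genericity argument'' cannot work even in principle. Nor does breaking the $n$ surviving ellipsoid circles by small perturbations with disjoint supports suffice: one must exclude \emph{all} closed orbits, including long ones created by the perturbation itself and orbits generated in the interpolation region where the irrational-ellipsoid form is glued to $\alst$ near infinity (your first step simply asserts that this gluing retains only finitely many closed orbits, which is unjustified and is already an instance of the same global problem). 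This global control over all orbits is the entire content of the theorem, and the actual construction in \cite{grz14} does not proceed by perturbing an ellipsoid at all: it prescribes the dynamics directly -- an irrational linear flow on a Clifford torus in $\C^n\times\{0\}\subset\R^{2n+1}$ serving as an invariant trap -- and then realises this flow as a contact Hamiltonian flow positively transverse to $\xist$, so that the Hamiltonian vector field is the Reeb field of a rescaled form $h\alst$; the absence of periodic orbits is built into the prescription rather than obtained by killing orbits one at a time.

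For (b), the trapping mechanism you offer is not viable: the Reeb flow preserves the volume form $\alpha\wedge(\rmd\alpha)^n$, so no argument of the type ``$R_\alpha$ points inward along part of $\partial B$'' can force an orbit to stay in $B$ -- trapped orbits must be asymptotic to an invariant set of measure zero, which is precisely the role of the irrational Clifford torus in \cite{grz14}. The appeal to the holomorphic-disc filling is also misplaced: outside the support of $h-1$ the Reeb field is just $\partial_z$, so the statement that escaping orbits never return is trivial and needs no pseudoholomorphic input, while the genuinely hard points (a) and (b) get no help from it. Finally, note that your argument nowhere uses $n\geq 2$; if it worked it would apply verbatim for $n=1$, contradicting Theorem~\ref{thm:EH}, which says that for such forms on $\R^3$ a trapped orbit forces a periodic one. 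In the genuine proof the dimension restriction enters exactly through the trap: an irrational linear flow on the $n$-torus has no periodic orbits only when $n\geq 2$. Your closing deferral to \cite{grz14} ``for the detailed construction'' in effect concedes that the mechanism you sketch is not the one that proves the theorem and cannot replace it.
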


The idea of the proof is to prescribe a certain dynamics, specifically,
an irrational flow on a Clifford torus in $\C^n\equiv\R^{2n}\times\{0\}
\subset\R^{2n+1}$ acting as a trap for orbits, and then to realise
this dynamics as a contact Hamiltonian flow positively
transverse to $\xist$ by translating
these prescriptions into properties of the Hamiltonian function;
the transversality condition guarantees that the Hamiltonian vector field
is the Reeb vector field of a rescaled contact form.
A similar dynamics for Riemannian geodesics was constructed in
\cite{baro12}: there are Riemannian metrics on $\R^n$, $n\geq 4$,
equal to the Euclidean metric outside a compact set,
with bounded geodesics but no periodic ones.

Eliashberg and Hofer also discuss the effect of changing the
topology inside $B$ rather than just the contact form.
Write $\hat{Z}$ for the cylinder with such a change of topology
performed inside a ball $B\subset Z$.
Here the same alternative as above holds, and
the foliation by holomorphic discs in the absence of periodic
Reeb orbits, i.e.\ alternative (i), is shown in \cite{elho94}
to prevent any non-trivial
topology. In contrast with the previous two theorems,
this Hamiltonian characterisation of the ball extends to higher dimensions.
To this end, one introduces a moduli space $\WW$ of holomorphic discs
as before and studies the evaluation map
$\WW\times\D\rightarrow\hat{Z}$, $\bigl((a,f),z\bigr)\mapsto f(z)$.
When there are no periodic Reeb orbits, this map will be proper and
surjective.  By using degree-theoretic methods and the $h$-cobordism
theorem, one arrives at the following result, see~\cite{geze16},
extending \cite{elho94} from three to higher dimensions.
In alternative (ii), one always finds a
contractible Reeb orbit whose period can be estimated by
an energy integral; this accounts for the quantitative statement.

\begin{thm}
\label{thm:ball}
Assume that $(M,\alpha)$ is a compact contact manifold with boundary $S^{2n}$,
such that, near the boundary, the contact form $\alpha$ looks like that of
a ball $B\subset (Z,\alst)$. If $R_{\alpha}$ has no contractible
periodic orbits of period smaller than~$\pi$, the manifold $M$
is diffeomorphic to a ball.
\qed
\end{thm}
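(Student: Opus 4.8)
The plan is to follow the strategy sketched in the paragraph immediately preceding the theorem: study a moduli space $\WW$ of holomorphic discs with boundary on $\partial M = S^{2n}$, and show that in the absence of short contractible Reeb orbits the evaluation map $\ev \co \WW \times \D \to M$ is proper and surjective, and then extract the diffeomorphism type of $M$ from the structure of $\WW$.

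\smallskip

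First I would set up the analytic framework. Complete $(M,\alpha)$ by attaching the cylindrical end $([0,\infty)\times S^{2n}, \rmd(\rme^t\alst|_{TS^{2n}}))$ coming from the identification of a collar of $\partial M$ with (a collar of) $\partial B$ in $(Z,\alst)$, and form the corresponding symplectic manifold with cylindrical end. Choose an $\omega$-tame almost complex structure $J$ that is cylindrical (translation-invariant, satisfying $J(\partial_t)=R_\alst$) near $\partial B$, agreeing with the standard $J$ on $\C^n$ there. The moduli space $\WW$ consists of $J$-holomorphic maps $(a,f)\co \D \to M$ with boundary on $\{0\}\times S^{2n}$ representing the generator of $H_2(M,\partial M)$ detected by the standard family of flat discs near the boundary. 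Near $\partial B$ this is exactly the Bishop-type family studied by Eliashberg and Hofer, and a standard Bishop-disc / Gromov-filling argument produces a nonempty, smooth, compact-up-to-bubbling $\WW$ whose boundary-marked discs foliate a neighbourhood of $S^{2n}$ in $M$.

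\smallskip

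The heart of the argument is the \emph{alternative} of \cite{elho94,geze16}: either the discs in $\WW$ stay uniformly bounded in gradient and energy, in which case $\WW$ is compact and $\ev$ is proper, or a sequence degenerates, and then — by the bubbling analysis of \cite{hofe93} together with the monotonicity/area estimates — the lost energy reappears as a finite-period contractible Reeb orbit; the energy bound forces its period to be less than $\pi$. Under the hypothesis this second case is excluded, so $\ev \co \WW\times\D \to M$ is proper. Transversality (by a generic choice of $J$ away from the cylindrical region) makes $\WW$ a compact manifold with boundary, $\ev$ a smooth proper map, and a degree computation near $\partial M$ — where $\ev$ restricts to the obvious diffeomorphism onto the collar foliation — shows $\ev$ has degree one, hence is surjective. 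One then argues, as in \cite{geze16}, that $\ev$ is a diffeomorphism away from a set where discs can intersect, that $\WW$ is itself diffeomorphic to a disc bundle over $S^{2n}$, and that $M$ is obtained from $\WW\times\D$ by a controlled collapse. Applying the $h$-cobordism theorem (using $n\geq 2$ so $\dim M = 2n+1 \geq 5$ and simple connectivity of the complement of the central disc, which follows from the foliation) identifies $M$ with $D^{2n+1}$.

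\smallskip

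\textbf{The main obstacle} is the compactness-versus-bubbling dichotomy and its quantitative refinement: one must rule out not only genuine Reeb-orbit bubbling but also sphere bubbles, disc bubbles with boundary on $S^{2n}$, and boundary bubbling, and then — in the Reeb-orbit case — convert the SFT-type energy of the degenerating sequence into a sharp bound ($<\pi$) on the period of the limiting orbit. This requires the a priori energy estimate for the Bishop family (the flat discs near $\partial B$ have symplectic area exactly $\pi$, which caps the energy of every disc in the connected moduli space) together with the careful gluing/neck-stretching analysis of \cite{hofe93,geze16}; everything downstream — properness, the degree argument, and the $h$-cobordism step — is then comparatively routine, following \cite{elho94} in dimension three and \cite{geze16} in general.
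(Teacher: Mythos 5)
There is a genuine gap, and it lies in the very setup of your moduli problem. You propose to attach a cylindrical end $\bigl([0,\infty)\times S^{2n},\rmd(\rme^t\alst|_{TS^{2n}})\bigr)$ to $\partial M$ and to study holomorphic discs with boundary on $\{0\}\times S^{2n}$. Neither construction is well defined here: the boundary $S^{2n}$ is an \emph{even}-dimensional hypersurface of the contact manifold $M$, not a contact manifold, so it has no symplectisation-type end to attach; and inside the $(2n+2)$-dimensional symplectisation the submanifold $\{0\}\times S^{2n}$ has dimension $2n$, which for $n\geq 2$ is neither half-dimensional (totally real/Lagrangian) nor a hypersurface, so ``discs with boundary on $S^{2n}$'' is not an elliptic (Fredholm) boundary value problem. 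The Bishop family with boundary on a $2$-sphere that you invoke is special to dimension three (Hofer's $\pi_2\neq 0$ argument) and does not generalise naively. The intended argument (Eliashberg--Hofer in dimension three, Geiges--Zehmisch in general) instead glues $M$ into the solid cylinder $Z=D^2\times\R^{2n-1}$ in place of the ball $B$, producing a contact manifold $\hat{Z}$, and studies discs $(a,f)\co\D\rightarrow(-\infty,0]\times\hat{Z}$ whose boundary maps with degree one to $\{0\}\times\partial Z$, extending the obvious flat discs $\{0\}\times D^2\times\{\mathrm{pt}\}$; the corner $\{0\}\times\partial Z$ together with the product structure of the model is what makes this a well-posed problem with a priori energy bound.

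This misplacement also undermines your quantitative step: the constant $\pi$ in the theorem is the area of the cross-sectional disc $D^2\subset Z$ of radius one, which bounds the energy of every disc in the connected moduli space and hence the period of any contractible orbit produced by breaking; it is not the area of ``flat discs near $\partial B$'' with boundary on the small sphere, which would be smaller and depend on $B$. Your description of the downstream steps --- no short orbits implies compactness of $\WW$, properness and degree one of the evaluation map $\WW\times\D\rightarrow\hat{Z}$, then degree theory and the $h$-cobordism theorem to identify the piece replacing $B$ with a ball --- does match the paper's outline, but as written the analytic foundation on which all of that rests would fail for $n\geq 2$.
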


For other results concerning the existence of periodic Reeb orbits on
non-compact manifolds, including cotangent bundles
over a non-compact base, see \cite{bpv09, bprv, suze16}.
\section{Contact surgery}
\label{section:surgery}
We now turn our attention to \emph{closed} contact manifolds $(M,\xi)$.
As indicated earlier, the methods introduced by Hofer~\cite{hofe93}
--- when the
conditions for their applicability are satisfied --- in their basic form
show the existence of a \emph{contractible} periodic Reeb orbit.
This places limitations on these methods, as there
are obvious examples of contact manifolds without
contractible periodic Reeb orbits, as mentioned in the introduction.

Theorem~\ref{thm:ball} can be read as an example where (W$^{\circ}$)
holds in the presence of non-trivial topology.
In the same vein, one may ask if (W) or (W$^{\circ}$) holds for any
contact structure on a closed manifold of sufficiently
complicated topology. One way to create non-trivial topology
is by performing surgery. The most simple type of surgery
is forming the connected sum $M_1\#M_2$ of two (connected) manifolds
$M_1,M_2$ of the same dimension~$m$: remove an open $m$-disc from
either manifold,
and glue in a tube $S^{m-1}\times [-1,1]$ to connect the two
manifolds. If neither manifold was a sphere, this creates non-trivial
topology. Then, in particular, the \emph{belt sphere} $S^{m-1}\times\{0\}$
does not bound a ball in $M_1\#M_2$.

In \cite{hofe93}, Hofer proved (W$^{\circ}$) for
closed $3$-dimensional contact manifolds $(M,\xi)$
with $\xi$ overtwisted or $M$ having non-trivial
second homotopy group. According to Eliashberg's
classification \cite{elia92} of contact structures on the $3$-sphere,
all contact structures on $S^3$ except the standard one are
overtwisted. These two results can be combined as follows.

\begin{thm}[Eliashberg, Hofer]
\label{thm:sumEH}
Let $(M,\xi)=(M_1,\xi_1)\#(M_2,\xi_2)$ be the contact connected sum
of two closed, connected contact $3$-manifolds.
If $(M,\xi)$ does not satisfy $\mathrm{(W^{\circ})}$, then one of the
summands $(M_i,\xi_i)$ is contactomorphic to $(S^3,\xist)$.
\qed
\end{thm}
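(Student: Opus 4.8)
The plan is to argue by contraposition, combining the two ingredients that the theorem bundles together: Hofer's theorem that a closed $3$-dimensional contact manifold with $\pi_2 \neq 0$ or with overtwisted contact structure satisfies $\mathrm{(W^{\circ})}$, and Eliashberg's classification which says the only non-overtwisted contact structure on $S^3$ is $\xist$. So suppose $(M,\xi) = (M_1,\xi_1)\#(M_2,\xi_2)$ does \emph{not} satisfy $\mathrm{(W^{\circ})}$; I want to show one summand is $(S^3,\xist)$.

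First I would use Hofer's result in the $\pi_2$-form. Since $(M,\xi)$ fails $\mathrm{(W^{\circ})}$, it cannot be overtwisted, and it must have $\pi_2(M) = 0$. The connected sum $M = M_1 \# M_2$ of two closed connected $3$-manifolds has, by van Kampen and the Mayer--Vietoris/sphere-theorem type considerations, the property that the embedded sphere $S := S^2 \times \{0\}$ along which the two pieces are glued is null-homotopic only if one of $M_1, M_2$ is a homotopy sphere; indeed the belt sphere of a connected sum is non-trivial in $\pi_2$ precisely when neither summand is simply connected with trivial $\pi_2$ — here is where one invokes the sphere theorem / the structure of $\pi_2$ of a connected sum. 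From $\pi_2(M) = 0$ one concludes that (at least) one summand, say $(M_2,\xi_2)$, is a homotopy $3$-sphere, hence by Poincar\'e (Perelman) diffeomorphic to $S^3$.

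Next I would pin down the contact structure on that $S^3$ summand. The manifold $M_2$ with an open ball removed is a contact submanifold of $(M,\xi)$; if $\xi_2$ were overtwisted, the overtwisted disc could be assumed (after an isotopy, since overtwisted discs are plentiful and the removed ball is small) to lie in $M_2 \setminus B^3 \subset M$, making $\xi$ itself overtwisted — contradicting that $(M,\xi)$ fails $\mathrm{(W^{\circ})}$. Therefore $\xi_2$ is tight on $S^3$, and by Eliashberg's classification $(M_2,\xi_2)$ is contactomorphic to $(S^3,\xist)$, which is the desired conclusion.

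The main obstacle is the passage from ``$(M,\xi)$ fails $\mathrm{(W^{\circ})}$'' to ``a summand is a homotopy sphere and its contact structure is tight'' — that is, making precise and legitimate the two localisation claims: that a non-trivial $\pi_2$ class (coming from the belt sphere when both summands are genuinely non-trivial) forces $\mathrm{(W^{\circ})}$ via Hofer, and symmetrically that overtwistedness of a summand is inherited by the sum. Both reduce to standard facts — the behaviour of $\pi_2$ under connected sum via the sphere theorem, and the fact that an overtwisted disc in a summand survives in the glued manifold — but one must be careful that the ball excised in forming the connected sum can be chosen disjoint from the relevant $2$-sphere or overtwisted disc, which is where the ``small ball'' language and a general-position / isotopy argument enter. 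Once these localisations are in hand, Hofer's theorem and Eliashberg's classification do the rest with no further work.
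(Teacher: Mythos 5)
Your proposal is correct and is essentially the argument the paper intends: the survey gives no detailed proof, stating only that Theorem~\ref{thm:sumEH} follows by combining Hofer's result (that $\mathrm{(W^{\circ})}$ holds when $\xi$ is overtwisted or $\pi_2(M)\neq 0$) with Eliashberg's classification of contact structures on $S^3$, and your write-up simply fills in the standard details (the belt sphere and $\pi_2$ of a connected sum, choosing the Darboux ball away from an overtwisted disc). The only cosmetic remark is that your passage from ``homotopy $3$-sphere'' to $S^3$ uses Perelman, which is harmless here but not something the paper makes explicit.
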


In other words, (W$^{\circ}$) and \emph{a fortiori} (W) holds for all
non-trivial contact connected sums. In fact, (W) is known
to hold for all closed contact $3$-manifolds by the
work of Taubes~\cite{taub07}, whose proof uses Seiberg--Witten theory;
see~\cite{hutc10} for an exposition of this proof.

In \cite{geze16a} we extended Theorem~\ref{thm:sumEH} to higher
dimensions, under some topological conditions.

\begin{thm}
\label{thm:sumGZ}
Let $(M,\xi)=(M_1,\xi_1)\#(M_2,\xi_2)$ be the contact connected sum
of two closed, connected contact manifolds of dimension
$2n+1\geq 5$. Assume further that
\begin{itemize}
\item[(i)] $M$ is simply connected and has torsion-free homology, or
\item[(ii)] $M$ is not simply connected.
\end{itemize}
Then, if $(M,\xi)$ does not satisfy $\mathrm{(W^{\circ})}$,
one of the summands $M_i$ is homeomorphic to~$S^{2n+1}$.
\qed
\end{thm}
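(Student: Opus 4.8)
The plan is to follow the Eliashberg--Hofer strategy for the connected sum, applied to the belt sphere of the connecting tube. After forming $(M,\xi)=(M_1,\xi_1)\#(M_2,\xi_2)$, the gluing tube $S^{2n}\times[-1,1]$ carries the model contact form of a neighbourhood of a ball $B\subset(Z,\alst)$ (this is exactly the setup in which the contact connected sum is performed; cf.\ \cite{wein91}, \cite[Chapter~6]{geig08}), with the belt sphere $S^{2n}\times\{0\}$ playing the role of the boundary sphere in Theorem~\ref{thm:ball}. So suppose $(M,\xi)$ does not satisfy $\mathrm{(W^{\circ})}$; pick a contact form $\alpha$ defining $\xi$ with \emph{no} contractible periodic Reeb orbit. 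Cutting $M$ along the belt sphere produces two compact contact manifolds $(M_i',\alpha')$ with boundary $S^{2n}$, each looking near the boundary like a ball in $(Z,\alst)$ after a suitable rescaling of the contact form (which does not affect the \emph{existence} of periodic Reeb orbits, only their periods).

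The key point is that the absence of a contractible periodic Reeb orbit \emph{in $M$} does not immediately hand us the absence of a contractible (or even short) periodic orbit in each half $M_i'$, since a loop contractible in $M_i'$ is contractible in $M$, but the period bound of $\pi$ demanded by Theorem~\ref{thm:ball} is not automatic. The standard fix is a \emph{neck-stretching} argument: rescale $\alpha$ by a positive function supported near the belt sphere so as to make the collar $S^{2n}\times[-T,T]$ of the belt sphere as long as we like. One then runs the moduli space $\WW$ of holomorphic discs with boundary on the belt sphere (in the symplectisation, with a tamed almost complex structure $J$ satisfying $J\partial_t=R_\alpha$ and standard near the model region, exactly as in Section~\ref{section:non-compact}), and invokes the dichotomy underlying Theorem~\ref{thm:ball}: either the discs sweep out one side as a foliation, forcing that $M_i'$ to be diffeomorphic to a ball, or the gradient blows up and one extracts a contractible periodic Reeb orbit whose period is controlled by an energy integral bounded in terms of the geometry of the model region (hence $<\pi$ after rescaling). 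Since by hypothesis there is no contractible periodic orbit at all in $M$, the blow-up alternative is excluded on \emph{both} sides, so \emph{both} $M_1'$ and $M_2'$ are diffeomorphic to balls --- but that would make $M$ a sphere, and $M_i$ either a sphere or, after re-filling, exactly a sphere summand. More precisely, one applies the argument to just \emph{one} side at a time: if neither $M_i$ is homeomorphic to $S^{2n+1}$, then neither $M_i'$ is diffeomorphic (a fortiori homeomorphic) to a ball, so on each side the foliation alternative fails, yielding a contractible periodic Reeb orbit --- contradiction.

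The technical heart, and the main obstacle, is the proper transfer of Theorem~\ref{thm:ball} from the literally ball-shaped setting to the collar of the belt sphere: one must (a) verify that the rescaled contact form near the belt sphere genuinely matches the model $B\subset(Z,\alst)$ up to a positive factor, so that the pseudoconvexity and the existence of the reference family of standard discs $\{0\}\times D^2\times\{z\}$ (in the $n=1$ picture) or its higher-dimensional analogue hold; (b) control compactness of $\WW$ --- bubbling of spheres or discs, and breaking along Reeb orbits --- using that $\omega=\rmd(\rme^t\alpha)$ is exact and $J$ is standard outside a compact set, so that any broken configuration produces a periodic Reeb orbit, necessarily contractible by a homotopy-theoretic argument using that the belt sphere bounds on the model side; and (c) make the degree-theoretic/$h$-cobordism argument of \cite{geze16} go through to conclude diffeomorphism (or at least homeomorphism, which is all the statement claims) with a ball. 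The topological hypotheses (i) and (ii) enter precisely here, exactly as in \cite{geze16a}: they are what make the $h$-cobordism theorem (or its low-dimensional / non-simply-connected substitutes) applicable, so that ``properly and surjectively swept out by a family of embedded discs parametrised by a disc'' upgrades to ``is a ball''. Granting the analysis of \cite{geze16} and \cite{geze16a}, which we treat as a black box exactly as the earlier results in this survey are, the argument is then a matter of organising the cut-and-stretch reduction; I expect the write-up to be short, with all the real work cited.
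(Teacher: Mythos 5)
There is a genuine gap, and it sits exactly where you try to reduce to Theorem~\ref{thm:ball}. The hypothesis ``$(M,\xi)$ does not satisfy $\mathrm{(W^{\circ})}$'' hands you \emph{some} contact form $\alpha$ defining $\xi$ with no contractible periodic orbit, and near the belt sphere this $\alpha$ is only $h\cdot\alpha_{\mathrm{model}}$ for an uncontrolled positive function $h$. Your parenthetical claim that a rescaling of the contact form ``does not affect the existence of periodic Reeb orbits, only their periods'' is false: multiplying a contact form by a non-constant positive function changes the Reeb vector field itself, not just the parametrisation, and the rescaled form may well acquire contractible periodic orbits. So you cannot normalise $\alpha$ to the model near the belt sphere (nor stretch a neck by deforming $\alpha$ --- which is in any case unnecessary, since the no-orbit hypothesis holds for \emph{all} periods, so the $\pi$-bound in Theorem~\ref{thm:ball} is vacuous) while keeping the hypothesis you need. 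Without that normalisation, the cut pieces $M_i'$ simply do not satisfy the boundary condition of Theorem~\ref{thm:ball} for the orbit-free form $\alpha$, and the whole cut-and-apply strategy does not get off the ground. A second warning sign: if your reduction did work, it would prove that each half is diffeomorphic to a ball with \emph{no} use of hypotheses (i) or (ii), i.e.\ a statement strictly stronger than Theorem~\ref{thm:sumGZ}; the fact that the theorem carries these hypotheses (and only claims a homeomorphism) indicates that the argument cannot be localised to one side of the belt sphere in this way.

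The proof sketched in the paper (and carried out in \cite{geze16a}) is organised precisely to avoid both problems: one does \emph{not} cut $M$, but studies a moduli space of holomorphic discs in the half-symplectisation $(-\infty,0]\times M$ of the given $\alpha$, with a \emph{Lagrangian} boundary condition implanted from the explicit tube model --- the symplectisation absorbs the conformal factor $h$ (different contact forms for $\xi$ appear as graphs), so no rescaling of $\alpha$ is ever needed. Since the belt sphere is an interior hypersurface rather than a pseudoconvex boundary, the discs are not confined to one side of it; the evaluation-map/degree argument therefore produces a filling of the belt sphere inside all of $M$, and it is exactly at the step of showing that this filling is a ball that the topological hypotheses (i) and (ii) (via $h$-cobordism-type arguments) are used, after which one summand is homeomorphic to $S^{2n+1}$. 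Your write-up treats \cite{geze16} and \cite{geze16a} as black boxes, but the box you actually need is the global one from \cite{geze16a}; Theorem~\ref{thm:ball} applied to the two halves is not a correct substitute.
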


This theorem can potentially be used
as a contact-geometric primality test for manifolds.
Results of this type may even have quite concrete applications in physics.
For instance, Albers et al.~\cite{afkp12}, in their study of the
planar circular restricted $3$-body problem, cf.~\cite{geig16},
show that the energy hypersurface for levels slightly above
the energy of the first Lagrange point is the contact connected
sum of two copies of $\RP^3$. In this specific case, however,
one does not need Theorem~\ref{thm:sumEH} to find a periodic
orbit, for the contact form on the
connected sum comes from the standard Weinstein model, so the
existence of a periodic orbit is obvious and indeed known
classically, cf.\ \cite[Section~10.3]{abma78}, \cite[Section~2]{lms85},
\cite[\S 18]{simo71}.

The idea for proving Theorem~\ref{thm:sumGZ} is to study a moduli space
$\WW$ of holomorphic discs inside
the half-symplectisation $(-\infty,0]\times M$ of $(M,\xi)$, with
a Lagrangian boundary condition coming from the explicit model
for the tube defining the connected sum.
If (W$^{\circ}$) does not hold, one can use a deformation of the
evaluation map $\WW\times\D\rightarrow W$ to a map into $M$
to produce a filling of the belt sphere
inside~$M$. This filling can be shown to be a ball under either
of the topological assumptions (i), (ii) in Theorem~\ref{thm:sumGZ}.

A different approach to statements as in that theorem
has been explored in~\cite{gnw16}.
\section{Open books}
\label{section:books}
Besides surgery, another way to construct manifolds are so-called
\emph{open books}.  In order to obtain a manifold $M$ of dimension $2n+1$,
we start with a compact $2n$-dimensional manifold $\Sigma$ with non-empty
boundary $\partial\Sigma$, and a diffeomorphism $\phi$ of~$\Sigma$,
equal to the identity near~$\partial\Sigma$. The mapping cylinder
\[ V(\Sigma,\phi)=\Sigma\times [0,2\pi]/(x,2\pi)\sim(\phi(x),0)\]
has boundary $\partial\Sigma\times S^1$. Form the manifold $M$ by
attaching $\partial\Sigma\times D^2$ along the boundary:
\[ M(\Sigma,\phi)=V(\Sigma,\phi)\cup_{\partial\Sigma\times S^1}
\partial\Sigma\times D^2.\]
The submanifold $\partial\Sigma\times\{0\}\subset M(\Sigma,\phi)$
is called the \emph{binding} of the open book. Its complement
fibres in the obvious way over~$S^1$ with monodromy~$\phi$, and the closures
of the fibres, which are copies of~$\Sigma$, are called the \emph{pages}.
For a beautiful survey on the topology of open books
see the appendix by Winkelnkemper in~\cite{rani98}.

We assume that $\Sigma$ is oriented, the binding carries the
boundary orientation of $\partial\Sigma$, and $M$ the induced orientation.
A positive contact form $\alpha$ on such an open book, i.e.\
a $1$-form satisfying $\alpha\wedge(\rmd\alpha)^n>0$, is said to be
\emph{adapted} if $\alpha$ induces a positive contact form on the
binding, and $\rmd\alpha$ a positive symplectic form on the
interior of each page. A contact structure is said to be
\emph{supported} by the open book if it can be defined by an adapted
contact form.

It is not difficult to construct contact structures supported by
open books, cf.~\cite[Section~7.3]{geig08}. One needs to start
with a \emph{Liouville domain} $(\Sigma,\rmd\beta)$, that is,
an exact symplectic manifold such that the Liouville
vector field $Y$ for $\rmd\beta$ defined by $Y\ip\rmd\beta=\beta$
is transverse to $\partial\Sigma$, pointing outwards.
Moreover, the monodromy diffeomorphism needs to be a symplectomorphism
of $(\Sigma,\rmd\beta)$.
In the sequel, it will be understood that $M(\Sigma,\phi)$
is equipped with the contact structure obtained in this way.
A much deeper theorem of Giroux~\cite{giro02}
says that in fact every contact structure on a closed manifold
is supported by an open book.

In \cite{dgz14}, (W$^{\circ}$) was proved for contact structures
supported by open books under certain assumptions on the binding.
Here is a simplified and more restrictive version of that
result.

\begin{thm}
If $(\Sigma^{2n},\rmd\beta)$ has the structure of a subcritical Stein manifold,
that is, a handle decomposition compatible with the Stein structure
involving only handles up to index $n-1$, then $M(\Sigma,\phi)$
satisfies $\mathrm{(W^{\circ})}$.
\qed
\end{thm}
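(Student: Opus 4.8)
The plan is to verify the hypotheses of the general theorem from \cite{dgz14} in the special case at hand, so the main task is to show that a subcritical Stein page forces the adapted Reeb flow to possess a contractible periodic orbit. First I would fix an adapted contact form $\alpha$ on $M(\Sigma,\phi)$ defining the supported contact structure, and pass to the half-symplectisation $W=(-\infty,0]\times M$ with $\omega=\rmd(\rme^t\alpha)$, equipped with an almost complex structure $J$ tamed by $\omega$ and satisfying $J(\partial_t)=R_\alpha$, cylindrical near the negative end in the usual Hofer sense. Near the binding $B=\partial\Sigma\times\{0\}$, the model for an adapted contact form lets one build an explicit family of holomorphic discs with boundary on a Legendrian-type condition associated to $B$, propagated across the pages via the mapping-cylinder structure; these discs play the role that the standard flat discs play in the Eliashberg--Hofer argument recalled in Section~\ref{section:non-compact}.

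The key steps, in order, are: (1) set up the moduli space $\WW$ of such holomorphic discs with the page/binding boundary condition, and establish the standard transversality and compactness package so that $\WW$ is a smooth manifold of the expected dimension; (2) use the subcriticality hypothesis --- only handles of index $\le n-1$ --- to control the topology of the pages, and hence of the bounding configuration, exactly as subcriticality is used in Stein/Weinstein fillability arguments to kill obstructions coming from high-index handles; (3) run the Eliashberg--Hofer alternative: either the discs in $\WW$ sweep out a foliation that projects to a filling of the relevant sphere in $M$, forcing a topological conclusion that the subcritical hypothesis rules out, or the gradients of a sequence of discs blow up, and then the bubbling-off analysis of \cite{hofe93} produces a periodic Reeb orbit; (4) finally, check that the orbit so obtained is contractible in $M$, which is where the boundary condition coming from the binding is used --- the disc family provides an explicit nullhomotopy of the limiting orbit, giving the $\circ$ in $\mathrm{(W^\circ)}$ rather than merely $\mathrm{(W)}$.

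The main obstacle I expect is step~(2) together with the topological bookkeeping in step~(3): one must show that, in the no-contractible-periodic-orbit case, the evaluation map $\WW\times\D\to W$ deforms to a proper surjective map onto $M$ whose degree-theoretic properties force the page (or the region swept out) to be a ball-like piece, and then argue that a subcritical Stein $\Sigma$ cannot be completed in this way unless $\Sigma$ itself is essentially trivial, contradicting $\partial\Sigma\neq\emptyset$ being non-trivial. This is exactly the step where the full strength of \cite{dgz14} is invoked, and where the ``simplified and more restrictive'' subcritical hypothesis is doing real work, since it removes the more delicate binding conditions of the general statement. The remaining analytic inputs --- the pseudoholomorphic curve theory in symplectisations \cite{hofe93} and the filling-with-discs technology \cite{elia90, grom85} --- are by now standard and can be quoted; likewise the Cartan-formula manipulations relating $\rmd\alpha$ to the Reeb dynamics are routine.
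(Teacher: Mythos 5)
Your plan has a structural flaw rather than a fixable gap, and it is worth noting first that the survey itself contains no proof of this statement: it is quoted from \cite{dgz14}, the closest in-text models being the proof of Theorem~\ref{thm:monodromy} and Remark~\ref{rem:caps}. Measured against those, the decisive problem is that you have modelled the argument on the Eliashberg--Hofer ball-detection dichotomy of Section~\ref{section:non-compact} and Theorem~\ref{thm:ball}, in which the ``no blow-up'' branch ends in a topological conclusion contradicting a hypothesis. Here there is no hypothesis available to contradict: a subcritical Stein page may perfectly well be a ball, e.g.\ $\Sigma=D^{2n}$ with $\phi=\id$ gives the standard contact sphere $(S^{2n+1},\xist)$, for which $\mathrm{(W^{\circ})}$ holds because contractible closed orbits genuinely exist, not because the foliation branch is absurd. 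So your step (3) --- ``a topological conclusion that the subcritical hypothesis rules out'' --- and your proposed endgame, that $\Sigma$ would have to be ``essentially trivial, contradicting $\partial\Sigma\neq\emptyset$'', cannot be closed off: a trivial subcritical page is no contradiction at all. The proof must manufacture the orbit unconditionally, and it must do so for \emph{every} contact form defining the supported structure, not just an adapted one; in the actual arguments the arbitrary form enters as the concave (negative) end, while the adapted/open-book data are used only to build the auxiliary symplectic manifold.

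Relatedly, the role of subcriticality is misidentified: it is used constructively, not as an obstruction to be violated. In \cite{dgz14} and in the closely related arguments reproduced here (proof of Theorem~\ref{thm:monodromy}, \cite{geze12}, \cite{bgz16}), the subcritical Stein structure splits off a $\C$-factor, and compactifying that factor to $\CP^1$ yields a cap containing a family of closed holomorphic spheres --- the analogue of the spheres $\{*\}\times\CP^1$ in $\hat{\Sigma}\times\CP^1$ above. One studies the moduli space of spheres in this class in the manifold obtained by gluing such a cap (through a suitable Liouville cobordism) to the half-symplectisation $(-\infty,0]\times M$ of the given contact form. If there were no contractible periodic Reeb orbit, SFT compactness would make this moduli space compact, so the evaluation map would have compact image, while a degree-one argument forces its image to reach arbitrarily far down the non-compact negative end --- a contradiction; the breaking that must therefore occur produces the orbit, and the Stokes-type argument of Remark~\ref{rem:caps}\,(2) gives contractibility. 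Nothing in this scheme resembles discs with a ``Legendrian-type boundary condition associated to the binding'': that is not a totally real boundary condition one can simply posit, and the ``standard transversality and compactness package'' you defer to is exactly what would have to be constructed. Note finally that the subcritical-page hypothesis does not ``remove the more delicate binding conditions'' of the general theorem of \cite{dgz14}; it is a special case of them, the page itself supplying the required filling of the binding.
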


It is also possible to prove (W$^{\circ}$) for open books when
suitable assumptions are made on the monodromy.

\begin{thm}
\label{thm:monodromy}
Assume that $\phi_1,\ldots,\phi_N$ are symplectomorphisms
of a Liouville manifold $(\Sigma,\rmd\beta)$
with $\phi_1\circ\cdots\circ\phi_N$ symplectically
isotopic to the identity, where the diffeomorphism and isotopies
are equal to the identity near the boundary. Then the
contact manifold $\sqcup_i M(\Sigma,\phi_i)$
satisfies $\mathrm{(W^{\circ})}$.
\end{thm}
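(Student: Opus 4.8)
The plan is to run the holomorphic disc filling argument of~\cite{dgz14} not on the individual open books, but on a single symplectic manifold into which all of them have been assembled, the hypothesis $\phi_1\circ\cdots\circ\phi_N\simeq\id$ being precisely what makes this assembly possible. So suppose, for a contradiction, that some contact form $\alpha$ defining the supported contact structure on $M:=\sqcup_iM(\Sigma,\phi_i)$ has no contractible periodic Reeb orbit on any of the pieces. Recall that the contact structure on the trivial open book $M(\Sigma,\id)$ is the one induced on the boundary of the Liouville domain $\Sigma\times\D$. First I would build an open symplectic manifold $(X,\omega)$ as follows: form the $\Sigma$-fibration over the plane with $N$ punctures whose monodromy around the $i$-th puncture is $\phi_i$ --- the symplectic isotopy from $\phi_1\circ\cdots\circ\phi_N$ to $\id$ provides exactly the consistency datum needed for this fibration to exist and to be the product $\Sigma\times\D$ near infinity ---, cap each puncture by the open book construction (gluing in the binding model $\partial\Sigma\times\D$, which turns the $i$-th puncture into a contact end $M(\Sigma,\phi_i)$), glue the filling $\Sigma\times\D$ of $M(\Sigma,\id)$ over the end at infinity, and finally attach to each contact end $M(\Sigma,\phi_i)$ the half-symplectisation $(-\infty,0]\times M(\Sigma,\phi_i)$ determined by $\alpha$. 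The outcome is a connected symplectic manifold $X$ carrying the $N$ cylindrical ends $M(\Sigma,\phi_i)$ and containing a copy of $\Sigma\times\D$ as a core region; in particular it comes with the evident family of holomorphic discs $\{\mathrm{pt}\}\times\D\subset\Sigma\times\D$.

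Next I would propagate this family by the Gromov--Eliashberg method, exactly as in~\cite{dgz14}: for a compatible almost complex structure $J$ with $J\partial_t=R_\alpha$ on the ends and standard on the core, the moduli space $\WW$ either foliates, extending to an embedded family sweeping out a region of $X$, or it exhibits gradient blow-up. In the blow-up case the bubbling analysis of~\cite{hofe93} yields a finite-energy holomorphic plane, hence a periodic Reeb orbit of $\alpha$; since that plane is a disc in $X$ capping the orbit, the orbit is contractible in the component $M(\Sigma,\phi_i)$ carrying its asymptotic end, contradicting our assumption. In the foliating case the evaluation map $\WW\times\D\to X$ is proper and of degree one --- as in the discussion preceding Theorem~\ref{thm:ball} --- and the degree-theoretic arguments of~\cite{geze16} pin the topology of $X$ down to that of $\Sigma\times\D$; this contradicts the presence of the $N$ distinct ends, and hence of the non-trivial monodromies, unless $M$ is a single standard contact sphere, in which case $\mathrm{(W^{\circ})}$ holds for $M$ outright. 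Either way our assumption is untenable, so $M$ satisfies $\mathrm{(W^{\circ})}$.

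The main obstacle, as is typical for these compactness--filling dichotomies, is to control the moduli space $\WW$ in the assembled, non-product manifold $X$: one must rule out bubbles escaping into the cylindrical ends, confirm that the foliating alternative really does force $X\simeq\Sigma\times\D$ in the claimed sense, and --- essential for getting the full statement $\mathrm{(W^{\circ})}$ rather than existence for one convenient contact form --- check that the whole argument is insensitive to the choice of defining form $\alpha$. A second, more bookkeeping-type point is the precise construction of the capped fibration $X$ together with the verification of its pseudoconvexity and Liouville properties along the horizontal boundary. Both amount to revisiting the estimates of~\cite{dgz14} in the present, slightly more global, setting.
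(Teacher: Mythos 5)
Your overall strategy---assemble the $N$ open books into a single symplectic manifold using the hypothesis $\phi_1\circ\cdots\circ\phi_N\simeq\id$, plant a standard family of holomorphic curves in the trivial-monodromy region, and let compactness produce a periodic orbit in the ends---is the right one, but two of your concrete steps fail as stated. First, the gluing at infinity goes the wrong way. The fibred piece you describe is precisely the cobordism of Avdek and Klukas \cite{avde12,kluk12}: its \emph{concave} end is $\sqcup_i M(\Sigma,\phi_i)$ and its \emph{convex} end is $M(\Sigma,\phi_1\circ\cdots\circ\phi_N)\cong M(\Sigma,\id)$. The Liouville filling $\Sigma\times D^2$ has $M(\Sigma,\id)$ as \emph{convex} boundary as well, and two convex contact-type boundaries cannot be glued symplectically (the Liouville directions clash), so your manifold $X$ is not defined. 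What can be glued to the convex end is a \emph{cap} with concave boundary; the paper takes $(\hat{\Sigma}\times\CP^1)\setminus(\Sigma\times D^2)$, obtained by smoothing the corners of $\Sigma\times D^2$ inside $\hat{\Sigma}\times\C$ and partially compactifying to $\hat{\Sigma}\times\CP^1$. This also repairs your second problem: your discs $\{\mathrm{pt}\}\times\D$ would have boundary on the hypersurface $\Sigma\times\partial\D$ lying in the \emph{interior} of $X$, which is not a totally real (Legendrian-foliated) boundary condition of the kind the disc-filling machinery of \cite{elho94,geze16} requires, so there is no well-posed moduli problem to run. In the paper the $\D$-factor is compactified to $\CP^1$, the discs become the \emph{closed} spheres $\{*\}\times\CP^1$, and one studies their moduli space as pioneered by McDuff \cite{mcdu91}; compactness in the sense of symplectic field theory then directly forces a holomorphic building with punctures asymptotic to Reeb orbits over the concave end. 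In particular there is no foliation-versus-blow-up dichotomy, and your claimed topological contradiction in the foliating case (``$X\simeq\Sigma\times\D$ contradicts the $N$ ends'') is unsubstantiated: the degree-theoretic and $h$-cobordism arguments of \cite{geze16} use that the relevant boundary is a sphere and have no analogue here.

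Second, even granting a periodic orbit, your contractibility argument is too quick: a finite-energy plane in $X$ capping the orbit only shows that the orbit is contractible \emph{in $X$}, not in the component $M(\Sigma,\phi_i)$ containing it. To obtain $\mathrm{(W^{\circ})}$ one must locate the plane in the negative half-symplectisation $(-\infty,0]\times\sqcup_i M(\Sigma,\phi_i)$; in the paper this is exactly what the appeal to \cite[Theorem~3.1]{geze12}, with the modifications of \cite[Theorem~2.8]{bgz16} for the split cap $\hat{\Sigma}\times\CP^1$, delivers, the key point being Stokes' theorem excluding negative punctures in the exact part of the cobordism. Remark~\ref{rem:caps}~(2) shows this is a genuine issue: when exactness of the capping pieces fails, one can in general only conclude the strong Weinstein conjecture, not the existence of a contractible orbit. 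A minor further point: the symplectic isotopy from $\phi_1\circ\cdots\circ\phi_N$ to $\id$ is not needed for the fibration over the punctured plane to exist (that works for arbitrary monodromies); it is needed to identify the convex end with $M(\Sigma,\id)=\partial(\Sigma\times D^2)$, which is what makes the capping by $(\hat{\Sigma}\times\CP^1)\setminus(\Sigma\times D^2)$ possible.
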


\begin{proof}
By \cite[Proposition~8.3]{avde12} or \cite[Theorem~1]{kluk12},
there is a Liouville cobordism from the disjoint union
\[ (M_0,\xi_0):=\bigsqcup_{i=1}^N M(\Sigma,\phi_i)\]
to $(M_1,\xi_1):=M(\Sigma,\phi_1\circ\cdots\circ\phi_N)$, i.e.\
an exact symplectic manifold $(V,\rmd\lambda)$ with oriented
boundary $\partial V=M_1\sqcup -M_0$ such that the Liouville vector field
$Y$ defined by $Y\ip\rmd\lambda=\lambda$ is transverse to the
boundary, pointing into $V$ along $M_0$ (the \emph{concave} end
of the cobordism)
and out of $V$ along~$M_1$ (the \emph{convex} end), and such that
$\ker\lambda|_{TM_i}=\xi_i$.
By the assumption on $\phi_1\circ\cdots\circ\phi_N$ being
symplectically isotopic to the identity, $(M_1,\xi_1)$
is contactomorphic to $M(\Sigma,\id)$.

Topologically we have
\[ M(\Sigma,\id)=\Sigma\times S^1\cup_{\partial\Sigma\times S^1}
\partial\Sigma\times D^2=\partial(\Sigma\times D^2).\]
The obvious Liouville structure
on $\Sigma\times D^2$ (with corners rounded) defines
a Liouville domain with boundary $M(\Sigma,\id)$.

The symplectic completion $\hat{\Sigma}$ of the Liouville
domain $(\Sigma,\rmd\beta)$ is defined by attaching a half-symplectisation
to its boundary:
\[ (\hat{\Sigma},\omega_{\hat{\Sigma}}):=
(\Sigma,\rmd\beta)\cup_{\partial}
\bigl([0,\infty),\rmd(\rme^t\beta|_{T(\partial\Sigma)}\bigr).\]
Then the corners of $\Sigma\times D^2$ can be smoothed inside
the symplectic manifold
\[ \Bigl(\hat{\Sigma}\times\C,\omega_{\hat{\Sigma}}+\frac{r\,\rmd r\wedge\rmd
\theta}{(1+r^2)^2}\Bigr),\]
which can be partially compactified to the symplectic manifold
$\hat{\Sigma}\times\CP^1$.
Finally, we build the symplectic manifold
\[ \bigl((-\infty,0]\times M_0\bigr)\cup_{M_0}V\cup_{M_1}
\bigl((\hat{\Sigma}\times\CP^1)\setminus (\Sigma\times D^2)\bigr),\]
where the first component is again a half-symplectisation.

This manifold contains holomorphic spheres of the form
$\{*\}\times\CP^1$, and one now studies the moduli space
of holomorphic spheres
in the homology class of these standard spheres.
This approach was pioneered by McDuff~\cite{mcdu91}.
Compactness of this moduli space in the sense of symplectic field
theory then leads to the existence of a periodic Reeb orbit
in the concave end $M_0$. The theorem now
follows directly from \cite[Theorem~3.1]{geze12}, where
(W$^{\circ}$) was proved for concave ends of a wide class
of Liouville cobordisms. There, the convex end was supposed
to come from a subcritical Stein manifold; the
necessary modifications for the split Liouville case
considered here were made in the proof of
\cite[Theorem~2.8]{bgz16}.
\end{proof}

\begin{rem}
\label{rem:caps}
(1) If the contact manifolds $M(\Sigma,\phi_i)$ are Liouville
fillable for $i$ in some subset $I\subset\{1,\ldots, N\}$, that is,
if there exists a Liouville cobordism
from the empty set to $M(\Sigma,\phi_i)$ for $i\in I$,
one can cap off these components of $(M_0,\xi_0)$. The preceding
argument then shows that (W$^{\circ}$) holds for $\sqcup_{j\not\in I}
M(\Sigma,\phi_j)$.

(2) Now suppose that instead of having Liouville
fillings we only know that the contact manifolds $M(\Sigma,\phi_i)$, $i\in I$,
are strongly symplectically fillable,
i.e.\ there is a closed symplectic manifold $(W_i,\omega_i)$ with
a Liouville vector field $Y$ defined near the boundary $\partial W_i=M_i$,
pointing outwards and such that $\ker(Y\ip\omega_i)=\xi_i$.
One can then still cap off these components, but now there might be symplectic
spheres in these caps, so the preceding compactness argument
does not go through. This problem can be circumvented with polyfolds.
The symplectic manifold we are considering contains, in the
cap at the convex end, an essential
holomorphic foliation in the sense of \cite[Definition~2.1]{suze17},
see Proposition~3.1 in that paper. As shown in \cite[Corollary~1.3]{suze17},
the concave end $\sqcup_{j\not\in I}M(\Sigma,\phi_j)$ then satisfies the
so-called \emph{strong} Weinstein conjecture: there is
a null-homologous link made up of periodic Reeb orbits.

There no longer needs
to be a contractible periodic Reeb orbit; see \cite[Section~6.4]{geze12}
for an explanation of this fact. Briefly, the compactness argument allows one
to find at least one finite energy plane asymptotic to
a cylinder over a Reeb orbit in $(-\infty,0]\times
\sqcup_{j\not\in I}M(\Sigma,\phi_j)$ or in the part of our
symplectic manifold made up of $V$ and the caps on the $M(\Sigma,\phi_i)$,
$i\in I$. In the case where the caps are Liouville fillings,
this latter part is still a Liouville manifold, and the theorem of Stokes
prevents finite energy planes with negative punctures. Thus, a finite energy
plane exists in $(-\infty,0]\times
\sqcup_{j\not\in I}M(\Sigma,\phi_j)$, and its projection to
$\sqcup_{j\not\in I}M(\Sigma,\phi_j)$ shows that we have a
contractible periodic Reeb orbit.

If the caps are not Liouville fillings, the finite energy plane might sit
in the union of $V$ with the caps, which no longer gives any information
about contractibility of the Reeb orbit inside
$\sqcup_{j\not\in I}M(\Sigma,\phi_j)$. Still, one finds a surface
in $\sqcup_{j\not\in I}M(\Sigma,\phi_j)$ with positive punctures asymptotic to
Reeb orbits, which proves the strong Weinstein conjecture.

(3) The same argument as in (1) or (2) goes through if
the $(M_i,\xi_i)$, $i\in I$, rather than having (Liouville resp.\
strong symplectic) fillings, only admit co-fillings, that is,
if there is a compact manifold (in the respective category) with convex
boundary, where $(M_i,\xi_i)$ is one of several boundary components.
Putting on these `caps' creates new convex boundary components.
These, however, do not affect the compactness argument, since
holomorphic spheres cannot touch such boundaries by the
maximum principle.
\end{rem}

Here is a simple example.

\begin{prop}
Let $\Sigma$ be a compact surface with
boundary and $\phi$ a composition of left-handed Dehn twists.
Then the contact manifold $M(\Sigma,\phi)$ satisfies $\mathrm{(W^{\circ})}$.
In higher dimensions, this is true for $(\Sigma,\rmd\beta)$ a Liouville
manifold and $\phi$ a composition of left-handed Dehn twists along
Lagrangian spheres.
\end{prop}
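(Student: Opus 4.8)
The plan is to derive the statement from Theorem~\ref{thm:monodromy} together with Remark~\ref{rem:caps}(1). Write $\phi=\tau_{L_1}^{-1}\circ\cdots\circ\tau_{L_k}^{-1}$, where $\tau_{L_j}$ denotes the right-handed Dehn twist along a Lagrangian sphere $L_j\subset\Sigma$; in the surface case the $L_j$ are simple closed curves and the $\tau_{L_j}$ ordinary Dehn twists, and one notes that a compact surface with non-empty boundary carries the structure of a Liouville domain $(\Sigma,\rmd\beta)$, so that this case is subsumed under the higher-dimensional one. Each $\tau_{L_j}$ may be taken to be a symplectomorphism of $(\Sigma,\rmd\beta)$ supported in the interior, hence equal to the identity near~$\partial\Sigma$, and the same then holds for~$\phi$.

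Set $\phi_1:=\phi$ and $\phi_{j+1}:=\tau_{L_{k+1-j}}$ for $j=1,\ldots,k$. Then
\[ \phi_1\circ\phi_2\circ\cdots\circ\phi_{k+1}=\phi\circ\tau_{L_k}\circ\cdots\circ\tau_{L_1}=\id, \]
so this composition is (trivially) symplectically isotopic to the identity, and Theorem~\ref{thm:monodromy} applies with $N=k+1$, showing that
\[ M(\Sigma,\phi)\ \sqcup\ M(\Sigma,\tau_{L_k})\ \sqcup\ \cdots\ \sqcup\ M(\Sigma,\tau_{L_1}) \]
satisfies $\mathrm{(W^{\circ})}$. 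Next I would observe that each of the $k$ open books $M(\Sigma,\tau_{L_j})$ whose monodromy is a \emph{single} right-handed twist is Liouville fillable. Indeed, the open book with trivial monodromy, $M(\Sigma,\id)=\partial(\Sigma\times D^2)$, has the obvious Liouville filling $\Sigma\times D^2$ (corners rounded, exactly as in the proof of Theorem~\ref{thm:monodromy}), and passing from the monodromy $\id$ to $\tau_{L_j}$ corresponds to attaching a critical Weinstein handle to this filling along the Legendrian sphere obtained by pushing $L_j$ into a page; this Legendrian surgery preserves Liouville fillability. For this well-known correspondence between adding a positive Dehn twist to the monodromy and Weinstein handle attachment to a filling see~\cite{avde12}; in dimension three it is classical.

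Capping off the $k$ components $M(\Sigma,\tau_{L_j})$ by these Liouville fillings, Remark~\ref{rem:caps}(1) --- applied with the index set $I=\{2,\ldots,k+1\}$ --- then yields $\mathrm{(W^{\circ})}$ for the single remaining component $M(\Sigma,\phi_1)=M(\Sigma,\phi)$, which is the assertion. The only ingredient beyond Theorem~\ref{thm:monodromy} and Remark~\ref{rem:caps} is the Liouville fillability of a one-twist open book $M(\Sigma,\tau_L)$; this --- and in particular checking that $L$, pushed into a page, becomes a Legendrian sphere carrying precisely the framing for which the handle attachment reproduces $M(\Sigma,\tau_L)$ --- is where I expect the main, though entirely standard, care to be needed.
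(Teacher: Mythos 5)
Your argument is correct, and it runs on the same engine as the paper's proof: Theorem~\ref{thm:monodromy} plus the capping-off procedure of Remark~\ref{rem:caps}~(1), with the only non-formal input being the fillability of open books whose monodromy is a product of right-handed twists. The difference lies in the decomposition. The paper takes $N=2$, pairing $M(\Sigma,\phi)$ with $M(\Sigma,\phi^{-1})$, and caps off the latter by citing the theorem of Loi--Piergallini and Giroux (see \cite{geig12}) in dimension three, respectively its higher-dimensional analogue \cite{koer10}, which gives Stein (hence Liouville) fillability of any open book with right-handed-twist monodromy in one stroke. You instead take $N=k+1$, splitting off one component per twist, so that you only need Liouville fillability of the one-twist open books $M(\Sigma,\tau_{L_j})$, which you obtain by attaching a critical Weinstein handle to $\Sigma\times D^2$ along the Legendrian push-off of $L_j$ into a page. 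That handle-attachment correspondence (including the framing point you rightly flag: in dimension three the contact framing is the page framing twisted by $-1$, in higher dimensions the framing of a Legendrian sphere is canonical) is precisely the mechanism underlying the Loi--Piergallini/Giroux/van Koert theorem, so the two routes rest on the same geometric fact; your version makes the filling explicit at the cost of iterating it, while the paper's version absorbs it into a single citation. One small correction: the reference for the ``positive Dehn twist $=$ Weinstein handle attachment'' statement, especially in higher dimensions, should be \cite{koer10} (or Giroux's work) rather than \cite{avde12}, which is cited in the paper for the cobordism from the disjoint union to the composed monodromy, i.e.\ for the input to Theorem~\ref{thm:monodromy} itself.
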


\begin{proof}
When $\Sigma$ is a surface, we apply the preceding discussion to
\[ (M_0,\xi_0):= M(\Sigma,\phi)\sqcup M(\Sigma,\phi^{-1}).\]
By a result of Loi--Piergallini and Giroux, see~\cite{geig12} for an
exposition, the contact manifold $M(\Sigma,\phi^{-1})$, where the monodromy
consists of right-handed Dehn twists, is Stein fillable, which means in
particular Liouville fillable. Now apply Remark~\ref{rem:caps}~(1).
The higher-dimensional analogue of the result by Loi--Piergallini
and Giroux is proved in \cite{koer10}; then the argument is
completely analogous.
\end{proof}

\begin{rem}
In the case where $M(\Sigma,\phi)$ is $3$-dimensional,
and if the Dehn twists on the surface $\Sigma$ making up $\phi$
are along homotopically non-trivial
curves, one can argue alternatively as follows.
By a result of Y{\i}lmaz~\cite{yilm11}, in this situation the contact
$3$-manifold $M(\Sigma,\phi)$ is overtwisted. Then apply Hofer's
result~\cite{hofe93} mentioned in Section~\ref{section:surgery}.
\end{rem}

One can also turn the argument on its head, as it were, and derive
statements about non-existence of fillings.

\begin{ex}
Let $M(\Sigma,\phi)$ be a contact manifold whose contact structure
can be defined by a contact form without contractible periodic Reeb
orbits, such as that described in the introduction. Then
the contact manifold $M(\Sigma,\phi^{-1})$ is not Liouville
fillable.
\end{ex}

From the Riemann--Finsler perspective, one is of course primarily
interested in proving the
Weinstein conjecture for unit (co-)tangent bundles.
However, we should iterate our remark from Section~\ref{section:dual}
that the principal aim in interpreting Finsler geodesic flows as Reeb flows
is not to reprove theorems in Finsler geometry, but to see
them as instances of results in a more general theory.

Hofer and Viterbo \cite{hovi88} have confirmed the Weinstein conjecture
for compact connected hypersurface $M\subset T^*Q$ in cotangent bundles
that satisfy the following conditions:
\begin{itemize}
\item[(i)] $M$ is of contact type, i.e.\ there is a Liouville
vector field $Y$ ($L_Y\ocan=\ocan$) defined near and transverse to~$M$.
\item[(ii)] The bounded component of $T^*Q\setminus M$ contains
the zero section~$Q$.
\end{itemize}
By Section~\ref{section:Finsler}, this
result includes the classical theorem of Lyusternik
and Fet~\cite{lyfe51} on the existence of a closed geodesic
on any compact Riemannian manifold.

The methods discussed in this section lead to
a proof of the strong Weinstein conjecture for cotangent bundles
of manifolds of the form $Q\times S^1$, where $Q$ is any closed manifold,
see~\cite[Corollary~4.8]{geze12}.

Liouville structures on manifolds of the form $Q\times [-1,1]$, with $Q$ a
compact left-quotient of one of the $3$-dimensional Lie groups
$\mathrm{Sol}^3$ or $\widetilde{\mathrm{SL}}_2$, were
constructed in~\cite{geig95,mcdu91}. Then the contact manifold
$M(Q\times[-1,1],\id)$, which satisfies (W$^{\circ}$) by
Theorem~\ref{thm:monodromy}, is diffeomorphic to
$\partial(Q\times [-1,1]\times D^2)=Q\times S^2$, so at least it
has the topology of the unit cotangent bundle of~$Q$. However, there
is no obvious identification of the contact structure on
$M(Q\times[-1,1],\id)$ with the canonical structure on the 
unit cotangent bundle.

Further examples of unit cotangent bundles for which the strong Weinstein
conjecture holds are given in ~\cite[Section~3.3]{suze17}.
\begin{ack}
We thank Umberto Hryniewicz for his perspicacious
comments on a draft version of this paper. We also thank
Felix Schlenk and the referees for comments on the
Finsler geometry of cotangent bundles that have resulted in the
writing of Section~\ref{section:dual}.
\end{ack}


\begin{thebibliography}{10}
%
\bibitem{abpa94}
{\sc M. Abate and G. Patrizio},
\textit{Finsler Metrics -- A Global Approach},
Lecture Notes in Mathematics \textbf{1591}
(Springer-Verlag, Berlin, 1994).
%
\bibitem{abma78}
{\sc R. Abraham and J. E. Marsden},
\textit{Foundations of Mechanics},
2nd edition (Benjamin Cummings, Reading, MA, 1978).
%
\bibitem{afkp12}
{\sc P. Albers, U. Frauenfelder, O. van Koert and G. Paternain},
Contact geometry of the restricted three-body problem,
\textit{Comm. Pure Appl. Math.}
\textbf{65} (2012), 229--263.
%
\bibitem{avde12}
{\sc R. Avdek},
Liouville hypersurfaces and connect sum cobordisms,
\texttt{arXiv:1204.3145}.
%
\bibitem{baro12}
{\sc V. Bangert and N. R\"ottgen},
Isoperimetric inequalities for minimal submanifolds in
Riemannian manifolds: a counterexample in higher codimension,
\textit{Calc. Var. Partial Differential Equations}
\textbf{45} (2012), 455--466.
%
\bibitem{bgz16}
{\sc K. Barth, H. Geiges and K. Zehmisch},
The diffeomorphism type of symplectic fillings,
\texttt{arXiv:1607.03310}.
%
\bibitem{bart12}
{\sc T. Barthelm\'e},
A new Laplace operator in Finsler geometry and periodic orbits
of Anosov flows,
Ph.D. thesis, Strasbourg (2012);
\texttt{arXiv:1204.0879}.
%
\bibitem{bpv09}
{\sc J. B. van den Berg, F. Pasquotto and R. C. Vandervorst},
Closed characteristics on non-compact hypersurfaces in~$\R^{2n}$,
\textit{Math. Ann.}
\textbf{343} (2009), 247--284.
%
\bibitem{bprv}
{\sc J. B. van den Berg, F. Pasquotto, T. Rot and
R. C. A. M. Vandervorst},
On periodic orbits in cotangent bundles of non-compact manifolds,
\textit{J. Symplectic Geom.}
\textbf{14} (2016), 1145--1173.
%
\bibitem{buse55}
{\sc H. Busemann},
\textit{The Geometry of Geodesics}
(Academic Press, New York, 1955).
%
\bibitem{ccl99}
{\sc S. S. Chern, W. H. Chen and K. S. Lam},
\textit{Lectures on Differential Geometry},
Series on University Mathematics \textbf{1}
(World Scientific, River Edge, NJ, 1999).
%
\bibitem{dgz14}
{\sc M. D\"orner, H. Geiges and K. Zehmisch},
Open books and the Weinstein conjecture,
\textit{Q. J. Math.}
\textbf{65} (2014), 869--885.
%
\bibitem{elia90}
{\sc Ya. Eliashberg},
Filling by holomorphic discs and its applications,
\textit{Geometry of Low-Dimensional Manifolds, Vol.~2\/} (Durham, 1989),
London Mathematical Society Lecture Note Series \textbf{151}
(Cambridge University Press, Cambridge, 1990), 45--67.
%
\bibitem{elia92}
{\sc Ya. Eliashberg},
Contact $3$-manifolds twenty years since J.~Martinet's work,
\textit{Ann. Inst. Fourier (Grenoble)\/}
\textbf{42} (1992), 165--192.
%
\bibitem{elho94}
{\sc Ya. Eliashberg and H. Hofer},
A Hamiltonian characterization of the three-ball,
\textit{Differential Integral Equations}
\textbf{7} (1994), 1303--1324.
%
\bibitem{fls15}
{\sc U. Frauenfelder, C. Labrousse and F. Schlenk},
Slow volume growth for Reeb flows on spherizations and contact
Bott--Samelson theorems,
\textit{J. Topol. Anal.}
\textbf{7} (2015), 407--451.
%
\bibitem{geig95}
{\sc H. Geiges},
Examples of symplectic $4$-manifolds with disconnected boundary
of contact type,
\textit{Bull. London Math. Soc.}
\textbf{27} (1995), 278--280.
%
\bibitem{geig08}
{\sc H. Geiges},
\textit{An Introduction to Contact Topology},
Cambridge Studies in Advanced Mathematics \textbf{109}
(Cambridge University Press, Cambridge, 2008).
%
\bibitem{geig12}
{\sc H. Geiges},
Contact structures and geometric topology,
\textit{Global Differential Geometry},
Springer Proceedings in Mathematics \textbf{17}
(Springer-Verlag, Berlin, 2012), 463--489.
%
\bibitem{geig16}
{\sc H. Geiges},
\textit{The Geometry of Celestial Mechanics},
London Mathematical Society Student Texts \textbf{83}
(Cambridge University Press, Cambridge, 2016).
%
\bibitem{grz14}
{\sc H. Geiges, N. R\"ottgen and K. Zehmisch},
Trapped Reeb orbits do not imply periodic ones,
\textit{Invent. Math.}
\textbf{198} (2014), 211--217.
%
\bibitem{geze12}
{\sc H. Geiges and K. Zehmisch},
Symplectic cobordisms and the strong Weinstein conjecture,
\textit{Math. Proc. Cambridge Philos. Soc.}
\textbf{153} (2012), 261--279.
%
\bibitem{geze16}
{\sc H. Geiges and K. Zehmisch},
Reeb dynamics detects odd balls,
\textit{Ann. Sc. Norm. Super. Pisa Cl. Sci. (5)}\/
\textbf{15} (2016), 663--681.
%
\bibitem{geze16a}
{\sc H. Geiges and K. Zehmisch},
The Weinstein conjecture for connected sums,
\textit{Int. Math. Res. Not. IMRN}
\textbf{2016} (2016), 325--342.
%
\bibitem{gnw16}
{\sc P. Ghiggini, K. Niederkr\"uger and C. Wendl},
Subcritical contact surgeries and the topology of symplectic fillings,
\textit{J. \'Ec. polytech. Math.}
\textbf{3} (2016), 163--208.
%
\bibitem{giro02}
{\sc E. Giroux},
G\'eom\'etrie de contact: de la dimension trois vers les dimensions
sup\'erieures,
\emph{Proceedings of the International Congress of Mathematicians,
Vol.~II} (Higher Education Press, Beijing, 2002),
405--414.
%
\bibitem{grom85}
{\sc M. Gromov},
Pseudoholomorphic curves in symplectic manifolds,
{\it Invent. Math.}
{\bf 82} (1985), 307--347.
%
\bibitem{hofe93}
{\sc H. Hofer},
Pseudoholomorphic curves in symplectizations,
\textit{Invent. Math.}
\textbf{114} (1993), 515--563.
%
\bibitem{hovi88}
{\sc H. Hofer and C. Viterbo},
The Weinstein conjecture in cotangent bundles and related results,
\textit{Ann. Scuola Norm. Sup. Pisa Cl. Sci. (4)}\/
\textbf{15} (1988), 411--445.
%
\bibitem{hrsa13}
{\sc U. L. Hryniewicz and P. A. S. Salom\~{a}o},
Global properties of tight Reeb flows with applications
to Finsler geodesic flows on~$S^2$,
\textit{Math. Proc. Cambridge Philos. Soc.}
\textbf{154} (2013), 1--27.
%
\bibitem{hrsa13a}
{\sc U. L. Hryniewicz and P. A. S. Salom\~{a}o},
\textit{Introdu\c{c}\~{a}o \`{a} Geometria Finsler},
Publica\c{c}\~{o}es Matem\'{a}ticas do IMPA
(Rio de Janeiro, 2013).
%
\bibitem{hutc10}
{\sc M. Hutchings},
Taubes's proof of the Weinstein conjecture in dimension three,
\textit{Bull. Amer. Math. Soc. (N.S.)\/}
\textbf{47} (2010), 73--125.
%
\bibitem{kluk12}
{\sc M. Klukas},
Open books and exact symplectic cobordisms,
\texttt{arXiv:1207.5647}.
%
\bibitem{koer10}
{\sc O. van Koert},
Lecture notes on stabilization of contact open books,
\texttt{arXiv:1012.4359}.
%
\bibitem{lima87}
{\sc P. Libermann and C.-M. Marle},
\textit{Symplectic Geometry and Analytical Mechanics},
Mathematics and its Applications \textbf{35}
(Reidel Publishing Co., Dordrecht, 1987).
%
\bibitem{lms85}
{\sc J. Llibre, R. Mart\'{\i}nez and C. Sim\'{o}},
Transversality of the invariant manifolds associated to
the Lyapunov family of periodic orbits near $L_2$ in the
restricted three-body problem,
\textit{J. Differential Equations}
\textbf{58} (1985), 104--156.
%
\bibitem{lyfe51}
{\sc L. A. Lyusternik and A. I. Fet},
Variational problems on closed manifolds (Russian),
\textit{Doklady Akad. Nauk SSSR (N.S.)}\/
\textbf{81} (1951), 17--18.
%
\bibitem{mcdu91}
{\sc D. McDuff},
Symplectic manifolds with contact type boundaries,
\textit{Invent. Math.}
\textbf{103} (1991), 651--671.
%
\bibitem{rade04a}
{\sc H.-B. Rademacher},
A sphere theorem for non-reversible Finsler metrics,
\textit{Math. Ann.}
\textbf{328} (2004), 373--387.
%
\bibitem{rade04b}
{\sc H.-B. Rademacher},
Nonreversible Finsler metrics of positive flag curvature,
\textit{A Sampler of Riemann--Finsler Geometry},
Mathematical Sciences Research Institute Publications \textbf{50}
(Cambridge University Press, Cambridge, 2004), 261--302.
%
\bibitem{rani98}
{\sc A. Ranicki},
\textit{High-dimensional Knot Theory},
Springer Monographs in Mathematics
(Springer-Verlag, Berlin, 1998).
%
\bibitem{schl16}
{\sc F. Schlenk},
Why Finsler flows naturally live in cotangent bundles,
private communication, December 2016.
%
\bibitem{simo71}
{\sc C. L. Siegel and J. K. Moser},
\textit{Lectures on Celestial Mechanics},
Die Grundlehren der mathematischen Wissenschaften \textbf{187}
(Springer-Verlag, Heidelberg, 1971).
%
\bibitem{suze16}
{\sc S. Suhr and K. Zehmisch},
Linking and closed orbits,
\textit{Abh. Math. Semin. Univ. Hambg.}
\textbf{86} (2016), 133--150.
%
\bibitem{suze17}
{\sc S. Suhr and K. Zehmisch},
Polyfolds, cobordisms, and the strong Weinstein conjecture,
\textit{Adv. Math.}
\textbf{305} (2017), 1250--1267.
%
\bibitem{taub07}
{\sc C. H. Taubes},
The Seiberg--Witten equations and the Weinstein conjecture,
\textit{Geom. Topol.}
\textbf{11} (2007), 2117--2202.
%
\bibitem{wein79}
{\sc A. Weinstein},
On the hypotheses of Rabinowitz' periodic orbit theorems,
\textit{J. Differential Equations\/}
\textbf{33} (1979), 336--352.
%
\bibitem{wein91}
{\sc A. Weinstein},
Contact surgery and symplectic handlebodies,
\textit{Hokkaido Math. J.}
\textbf{20} (1991), 241--251.
%
\bibitem{yilm11}
{\sc E. Y{\i}lmaz},
A note on overtwisted contact structures,
\textit{Studia Sci. Math. Hungar.}
\textbf{48} (2011), 130--134.
%
\end{thebibliography}
\end{document}